\author[P.~Leonetti]{Paolo Leonetti}
\address{Department of Statistics, Universit\`a ``Luigi Bocconi'', via Roentgen 1, 20136 Milan, Italy}
\email{leonetti.paolo@gmail.com}
\author[C.~Orhan]{Cihan Orhan}
\address{Department of Mathematics, Faculty of Science, Ankara University, 06100 Tandogan Ankara, Turkey} %
\email{orhan@science.ankara.edu.tr}
\keywords{Locally convex FK space; ideal convergence; tall ideal; summability.}
\thanks{P.~Leonetti is grateful to PRIN 2017 (grant 2017CY2NCA) for financial support.}
\subjclass[2010]{Primary: 46A45, 57N17. Secondary: 40A35, 54A20.}
\title{On some locally convex FK spaces}
   \def\MR#1{}
\newtheorem{thm}{Theorem}[section]
\newtheorem{cor}[thm]{Corollary}
\newtheorem{lem}[thm]{Lemma}
\newtheorem{prop}[thm]{Proposition}
\theoremstyle{definition} 
\let\olddefi\defi
\renewcommand{\defi}{\olddefi\normalfont}
\newtheorem{question}{Question}
\let\oldquestion\question
\renewcommand{\question}{\oldquestion\normalfont}
\let\oldexample\example
\renewcommand{\example}{\oldexample\normalfont}
\let\oldrmk\rmk
\renewcommand{\rmk}{\oldrmk\normalfont}
\providecommand{\MR}[1]{}
\providecommand{\MR}{\relax\ifhmode\unskip\space\fi MR }
\providecommand{\href}[2]{#2}
\begin{document}

\maketitle
\thispagestyle{empty}

\begin{abstract}
We provide necessary and/or sufficient conditions on vector spaces $V$ of real sequences to be a Fr\'{e}chet space such that each coordinate map is continuous, that is, to be a locally convex FK space.

In particular, we show that if $c_{00}(\mathcal{I})\subseteq V\subseteq \ell_\infty(\mathcal{I})$ for some ideal $\mathcal{I}$ on $\omega$, then $V$ is a locally convex FK space if and only if 
there exists an infinite set $S\subseteq \omega$ for which every infinite subset does not belong to $\mathcal{I}$.
\end{abstract}

%
\section{Introduction}\label{sec:intro}


Let $\mathbf{R}^\omega$ be the vector space of real sequences, endowed with the topology of pointwise convergence; hereafter, $\omega$ denotes the set of nonnegative integers. 
A topological vector space $(V,\tau)$ is said to be an \emph{FK space} if $V\subseteq \mathbf{R}^\omega$, $\tau$ is completely metrizable, and the inclusion map $\iota: V\to \mathbf{R}^\omega$ is continuous. 
If, in addition, $V$ admits neighborhood basis at $0$ consisting of convex sets, then $V$ is a \emph{locally convex FK space}; equivalently, a locally convex FK space is a Fr\'{e}chet vector subspace $V\subseteq \mathbf{R}^\omega$ for which the inclusion map $\iota$ is continuous, see e.g. \cite{MR518316, MR738632}. 
We refer the reader to \cite[Chapter 4]{MR738632} for motivations on the study of locally convex FK spaces and their relation with summability theory.

Let $\mathcal{I}$ be an ideal on $\omega$, that is, a proper hereditary collection of subsets of $\omega$ which is closed under finite unions. 
Unless otherwise stated, we assume that $\mathcal{I}$ contains the family of finite sets $\mathrm{Fin}:=[\omega]^{<\omega}$. 
Among the most important examples, we find the ideal $\mathcal{Z}$ of asymptotic density zero sets, that is,
$$
\mathcal{Z}:=\left\{S\subseteq \omega: \lim_{n\to \infty}\frac{|S\cap [0,n]|}{n+1}=0\right\}.
$$
We write $\mathcal{I}^\star:=\{S\subseteq \omega: S^c \in \mathcal{I}\}$ for its dual filter. 
If $A=(a_{n,k})$ is a nonnegative regular matrix, we denote its induced ideal by 
\begin{equation}\label{eq:definitionIA}
\textstyle 
\mathcal{I}_A:=\left\{S\subseteq \omega: \lim_n\sum_{k \in S}a_{n,k}=0\right\}.
\end{equation}
Note that $\mathcal{Z}=\mathcal{I}_{C_1}$, where $C_1$ is usual Ces\`{a}ro matrix. Ideals are regarded as subset of the Cantor space $\{0,1\}^\omega$.

A real sequence $x=(x(n): n \in \omega) \in \mathbf{R}^\omega$ is said to be $\mathcal{I}$\emph{-convergent} to $\eta \in \mathbf{R}$, shortened as $\mathcal{I}\text{-}\lim x=\eta$, if $\{n \in \omega: |x(n)-\eta|>\varepsilon\} \in \mathcal{I}$ for all $\varepsilon>0$;  hence $\mathrm{Fin}$-convergence coincides with ordinary convergence, and $\mathcal{Z}$-convergence with \emph{statistical convergence}, see e.g. \cite{MR1734462, MR1416085}.  
Let us write $c(\mathcal{I})$ [$c_0(\mathcal{I})$, resp.] for the vector space of $\mathcal{I}$-convergent sequences [$\mathcal{I}$-convergent sequences to $0$, resp.] and 
$$
c_{00}(\mathcal{I}):=\left\{x \in \mathbf{R}^\omega: \mathrm{supp}\,x \in \mathcal{I}\right\},
$$
where $\mathrm{supp}\,x:=\{n \in \omega: x(n)\neq 0\}$. Lastly, we let $\ell_\infty(\mathcal{I})$ be the vector space of $\mathcal{I}$-bounded sequences $x \in \mathbf{R}^\omega$, so that $\{n \in \omega: |x(n)|>M\} \in \mathcal{I}$ for some $M \in \mathbf{R}$. 

With these premises, Connor proved in \cite[Theorem 3.3]{MR954458} that:
\begin{thm}\label{thm:connor}
$c(\mathcal{Z})$ cannot be endowed with a locally convex FK topology.
\end{thm}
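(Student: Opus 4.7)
My plan is to derive Theorem~\ref{thm:connor} as a corollary of the paper's main characterization (as announced in the abstract): for every ideal $\mathcal{I}$ on $\omega$ and every vector space $V$ with $c_{00}(\mathcal{I})\subseteq V\subseteq \ell_\infty(\mathcal{I})$, the space $V$ admits a locally convex FK topology if and only if there exists an infinite set $S\subseteq \omega$ every infinite subset of which lies outside $\mathcal{I}$. To apply this with $\mathcal{I}=\mathcal{Z}$ and $V=c(\mathcal{Z})$, one first checks the inclusions $c_{00}(\mathcal{Z})\subseteq c(\mathcal{Z})\subseteq \ell_\infty(\mathcal{Z})$: the right one holds because $\mathcal{Z}\text{-}\lim x=\eta$ forces $\{n:|x(n)-\eta|>1\}$ to have density zero, whence $|x(n)|\le |\eta|+1$ off a density-zero set. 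Next one verifies that $\mathcal{Z}$ is \emph{tall}: any infinite $A=\{a_1<a_2<\cdots\}\subseteq \omega$ contains the infinite subset $B=\{a_{k^2}:k\ge 1\}$, whose $n$-th term is at least $n^2-1$, so $B$ has density zero and lies in $\mathcal{Z}$. Consequently no witnessing $S$ exists for $\mathcal{Z}$, and the characterization rules out a locally convex FK topology on $c(\mathcal{Z})$.

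\textbf{The ``only if'' direction of the general equivalence.} This is the content actually needed. One argues by contradiction: suppose $(V,\tau)$ is locally convex FK with $c_{00}(\mathcal{I})\subseteq V\subseteq \ell_\infty(\mathcal{I})$ and $\mathcal{I}$ is tall. Metrizability and local convexity supply an increasing countable family of continuous seminorms $(p_m)_{m\ge 1}$ generating $\tau$, and continuity of the inclusion $\iota:(V,\tau)\to \mathbf{R}^\omega$ makes each coordinate evaluation $e_n^*$ continuous, in particular $p_m(e_n)<\infty$ for every $m,n$. The plan is then to construct, by a diagonal procedure exploiting tallness, indices $n_1<n_2<\cdots$ and scalars $c_k$ such that (i) the partial sums $s_N=\sum_{k\le N}c_k e_{n_k}$ are $\tau$-Cauchy --- engineered through bounds $p_m(c_k e_{n_k})<2^{-k}$ for all $m\le k$ --- yet (ii) the pointwise limit $y$ does not lie in $V$. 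Completeness of $\tau$ then gives $s_N\to y'\in V$ in $\tau$, coordinate continuity forces $y'=y$, and the contradiction $y\in V$ follows.

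\textbf{Main obstacle.} The combinatorial heart is to meet (i) and (ii) simultaneously. Condition (i) wants the $n_k$ to be sparse enough that the seminorms on $e_{n_k}$ collapse; condition (ii), in the case $V=c(\mathcal{Z})$, requires either the scalars $c_k$ to grow or the indices $n_k$ to populate an $\mathcal{I}$-positive set structured to obstruct statistical convergence of the pointwise limit. These pressures pull against one another, and tallness of $\mathcal{I}$ is precisely the combinatorial feature that reconciles them: inside every candidate ``large'' set of coordinates there is an infinite $\mathcal{I}$-small subset along which the seminorm constraints can still be tuned. The analogous construction breaks down for $\mathcal{I}=\mathrm{Fin}$, reflecting the classical fact that $c_0,c,\ell_\infty$ carry genuine locally convex FK topologies.
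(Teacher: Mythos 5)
Your first paragraph — reducing Theorem \ref{thm:connor} to the general characterization by checking $c_{00}(\mathcal{Z})\subseteq c(\mathcal{Z})\subseteq \ell_\infty(\mathcal{Z})$ and the tallness of $\mathcal{Z}$ — is fine, but it shifts all of the content onto the ``tall $\Rightarrow$ no locally convex FK topology'' direction, and your sketch of that direction has a genuine gap which you yourself flag as the ``Main obstacle'' and never resolve. The difficulty is not a detail: the two requirements of your gliding-hump plan pull in incompatible directions, and tallness, used the way you propose, does not reconcile them. If the chosen coordinates $\{n_k\}$ form a set in $\mathcal{I}$ (which is what tallness naturally supplies), then the pointwise limit $y=\sum_k c_k e_{n_k}$ is supported on a member of $\mathcal{I}$, hence $y\in c_{00}(\mathcal{I})\subseteq V$ and no contradiction arises. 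If instead you keep the support $\mathcal{I}$-positive, then for $V=c(\mathcal{Z})$ you must keep the nonzero values $c_k$ bounded away from $0$ along a set of positive upper density (otherwise $y\in c_0\subseteq c(\mathcal{Z})$), and your Cauchy condition $p_m(c_k e_{n_k})<2^{-k}$ then forces $p_m(e_{n_k})$ to be very small along those coordinates --- something you have no means to guarantee for an arbitrary hypothetical FK topology on $V$: nothing rules out, a priori, that all the seminorms satisfy $p_m(e_n)\ge 1$, as happens for the sup norm on $\ell_\infty$. So the construction at the heart of the argument is missing, and the role you assign to tallness is in fact inverted.

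By contrast, the paper (following Connor and Kline) never works inside the unknown topology: by the Bennett--Kalton criterion (Theorem \ref{thm:bennetkalton}), $V$ admits a locally convex FK topology if and only if some $B\subseteq c_{00}$ is pointwise bounded on $V$ yet unbounded against some $y\in\mathbf{R}^\omega$. Given such $B$ and $y$, one picks $x_n\in B$ with $|x_n\cdot y|\ge 2^n$, runs a gliding hump to isolate coordinates $k_n$ where $x_{m_n}$ and $y$ interact, uses tallness to choose an infinite $S\subseteq\{k_n\}$ with $S\in\mathcal{I}$, and defines recursively a sequence $v$ supported on $S$ --- hence $v\in c_{00}(\mathcal{I})\subseteq V$ --- with $\sup_{x\in B}|x\cdot v|=\infty$, contradicting the boundedness of $B$ on $V$. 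There tallness is used to place the witness \emph{inside} $V$, not to push a limit outside it. To repair your argument you would need either a lemma controlling the seminorms of the unit vectors along suitably large sets of coordinates (which is precisely what the duality route circumvents), or to switch to the dual-pairing argument.
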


Building on his methods, Kline \cite[Theorem 1]{MR1347075} extended it by showing that:
\begin{thm}\label{thm:Kline}
Let $A=(a_{n,k})$ be a nonnegative regular matrix with the property that 
$\lim_n \max_k a_{n,k}=0$. Then $c(\mathcal{I}_A)$ does not admit a locally convex FK topology. 
\end{thm}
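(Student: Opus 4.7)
My plan is to argue by contradiction. Suppose $\tau$ is a locally convex FK topology on $c(\mathcal{I}_A)$ and derive a contradiction from the hypothesis $\lim_n\max_k a_{n,k}=0$.

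\textbf{Step 1 (tallness of $\mathcal{I}_A$).} I would first verify that under the hypothesis $\mathcal{I}_A$ is \emph{tall}: every infinite $S\subseteq\omega$ contains an infinite $T\in\mathcal{I}_A$. Given such $S$, I build $T=\{t_1<t_2<\cdots\}\subseteq S$ by diagonal recursion. At stage $j$, using $\max_k a_{n,k}\to 0$, fix $N_j$ with $\max_k a_{n,k}<2^{-j}/j$ for all $n\ge N_j$; then, since each row of $A$ is summable, so that $a_{n,k}\to 0$ as $k\to\infty$ for each fixed $n$, pick $t_{j+1}\in S$ with $t_{j+1}>t_j$ and $a_{n,t_{j+1}}<2^{-j}/N_j$ for every $n<N_j$. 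Splitting $\sum_{k\in T}a_{n,k}$ into the indices $\le j$ and $>j$ for $n\in[N_j,N_{j+1})$ gives an $O(2^{-j})$ bound, hence $\lim_n\sum_{k\in T}a_{n,k}=0$ and $T\in\mathcal{I}_A$.

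\textbf{Step 2 (partition of $\omega$ into $\mathcal{I}_A$-pieces).} Iterating Step 1 I extract a pairwise disjoint sequence $(S_m)_{m\ge 1}$ of infinite sets in $\mathcal{I}_A$ that partitions $\omega$. At stage $m$ I apply tallness to the remainder $R_m=\omega\setminus\bigsqcup_{i<m}S_i$, which is not in $\mathcal{I}_A$ (since $\omega\notin\mathcal{I}_A$ and ideals are closed under finite unions), and additionally insert $\min R_m$ into $S_m$ so that the partition exhausts $\omega$. Crucially, for every $M\ge 1$ the tail $\bigsqcup_{m\ge M}S_m=\omega\setminus\bigsqcup_{m<M}S_m$ lies outside $\mathcal{I}_A$. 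Each indicator $\mathbf{1}_{S_m}$ sits in $c_0(\mathcal{I}_A)\subseteq c(\mathcal{I}_A)$.

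\textbf{Step 3 (the contradiction).} Fix an increasing family $(p_j)$ of continuous seminorms defining $\tau$. The key claim is that one may pass to a subsequence of $(S_m)$ and choose scalars $\lambda_m>0$ with $\lambda_m\to\infty$ yet $\lambda_m\,p_j(\mathbf{1}_{S_m})\le 2^{-m}$ for every $j\le m$. Granting this, $\sum_m\lambda_m\mathbf{1}_{S_m}$ is $\tau$-Cauchy, hence $\tau$-convergent by Fr\'echet completeness to some $z\in c(\mathcal{I}_A)$; coordinate continuity forces $z(k)=\lambda_m$ for $k\in S_m$. Then $\{k:|z(k)|>M\}\supseteq\bigsqcup_{\lambda_m>M}S_m\notin\mathcal{I}_A$ by Step 2, contradicting $z\in c(\mathcal{I}_A)\subseteq\ell_\infty(\mathcal{I}_A)$.

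\textbf{Anticipated obstacle.} The delicate point is the subsequential/scaling claim in Step 3 — that the $\mathbf{1}_{S_m}$ can be absorbed by a divergent sequence of scalars along a subsequence of disjoint supports. This is exactly where the \emph{locally convex} hypothesis is needed: via a Mazur-type argument inside the Fr\'echet space, combined with the fact that the $\mathbf{1}_{S_m}$ are pointwise null with pairwise disjoint supports and that continuous seminorms of a locally convex FK topology are controlled, up to small error, by finitely many coordinate functionals, one extracts a subsequence (or convex combinations) along which every $p_j$-seminorm tends to $0$ fast enough to offset the growth of $\lambda_m$. Without local convexity this absorption step may fail; without the tallness of Step 1 the partition of Step 2 is unavailable. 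The proof therefore amounts to interleaving these two ingredients so that both ``seminorm smallness of the blocks'' and ``$\mathcal{I}_A$-largeness of the residual union'' hold simultaneously.
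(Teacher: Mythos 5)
Your Steps 1 and 2 are fine (Step 1 is the known fact, cited in the paper from Drewnowski--Pa\'{u}l, that $\mathcal{I}_A$ is tall exactly when $\lim_n\max_k a_{n,k}=0$), but the proof has a genuine gap precisely at the point you flag: the claim in Step 3 that one can pass to a subsequence of the blocks $\mathbf{1}_{S_m}$ and find scalars $\lambda_m\to\infty$ with $\lambda_m\,p_j(\mathbf{1}_{S_m})\le 2^{-m}$ for $j\le m$ is not a technical detail to be supplied later --- it is the whole theorem, and the heuristic you offer for it is false. Continuous seminorms of a locally convex FK space are \emph{not} controlled up to small error by finitely many coordinate functionals (the sup-norm of any BK space, e.g.\ $\ell_\infty$, is a counterexample), and pointwise-null vectors with disjoint supports need not be bounded, let alone absorbable, in the FK topology. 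Worse, the strategy cannot be repaired as structured, because tallness enters your argument only through the existence of the partition in Step 2, and that partition structure also exists for nontall ideals: for $\mathcal{I}=\emptyset\times\mathrm{Fin}$ the rows $\omega\times\{k\}$ partition $\omega^2$ into disjoint infinite sets in $\mathcal{I}$ whose tail unions lie outside $\mathcal{I}$, their indicators lie in $c_0(\mathcal{I})$, and yet $c(\emptyset\times\mathrm{Fin})$ \emph{does} admit a locally convex FK topology (Corollary \ref{cor:cI} together with Theorem \ref{thm:positiveside}). So if Step 3 followed from the data produced by Steps 1--2 plus local convexity and completeness, it would contradict the positive half of the paper; any correct absorption argument must use tallness in a deeper way than merely producing the partition.

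For comparison, the paper's route (through Theorem \ref{thm:newnotllocallyconvex} and Corollary \ref{cor:characterizationregularmatrices}) does not run a gliding-hump inside the unknown topology at all. It invokes the Bennett--Kalton characterization (Theorem \ref{thm:bennetkalton}): a locally convex FK topology on a dense proper subspace $V$ exists iff some $B\subseteq c_{00}$ is ``$V$-bounded'' ($\sup_{x\in B}|x\cdot v|<\infty$ for all $v\in V$) but not $\mathbf{R}^\omega$-bounded. Given such a $B$ and a witness $y$ with $\sup_{x\in B}|x\cdot y|=\infty$, one extracts a sequence $(x_{m_n})\subseteq B$ and ``hump positions'' $k_n$, uses tallness to choose an infinite set $S\subseteq\{k_n\}$ belonging to $\mathcal{I}_A$, and recursively defines a multiplier $v$ supported on $S$ (hence $v\in c_{00}(\mathcal{I}_A)\subseteq c(\mathcal{I}_A)$) with $x_{m_{t_n}}\cdot v=n$, contradicting $V$-boundedness of $B$. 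The essential difference is that there the object built from tallness is adapted to the arbitrary bounded set $B$, whereas your blocks $S_m$ are chosen in advance, independently of the topology, which is why they cannot force a contradiction.
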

An alternative proof of Theorem \ref{thm:Kline} has been given by Demirci and Orhan in \cite[Theorem 3]{MR1758671}. 
%
An analogue  of Theorem \ref{thm:Kline} has also been proved
for lacunary statistical convergence in \cite{MR1758671}:
\begin{thm}\label{thm:orhan}
Let $\theta:=(I_n)$ be an increasing sequence of nonempty consecutive finite intervals of $\omega$ such that $\lim_n |I_n|=\infty$ and define 
\begin{equation}\label{eq:lacunaryideal}
\mathcal{I}_\theta:=\left\{S\subseteq \omega: \lim_{n\to \infty} \mu_n(S)=0\right\}, 
\quad \text{ where }\quad
\mu_n(S):=|I_n \cap S|/|I_n|.
\end{equation}
Then $c(\mathcal{I}_\theta)$ does not admit a locally convex FK topology. 
\end{thm}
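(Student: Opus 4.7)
The plan is to realize $\mathcal{I}_\theta$ as the summability ideal $\mathcal{I}_A$ of a suitable nonnegative regular matrix $A$ satisfying the hypothesis $\lim_n \max_k a_{n,k}=0$, and then invoke Theorem \ref{thm:Kline} directly, so that no fresh argument is needed.

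Concretely, I would define $A=(a_{n,k})$ by $a_{n,k}:=\mathbf{1}_{I_n}(k)/|I_n|$ for all $n,k \in \omega$. First, verify that $A$ is a nonnegative regular matrix: nonnegativity is immediate; each row sums to $1$ since each $I_n$ contains exactly $|I_n|$ indices; and for every fixed $k$, the disjointness of the intervals $(I_n)$ implies $a_{n,k}=0$ for all but at most one $n$, so $\lim_n a_{n,k}=0$. These are precisely the Silverman--Toeplitz conditions. Next, observe that for every $S\subseteq \omega$,
$$
\sum_{k \in S} a_{n,k}=\frac{|S\cap I_n|}{|I_n|}=\mu_n(S),
$$
so by \eqref{eq:definitionIA} and \eqref{eq:lacunaryideal} the ideals $\mathcal{I}_A$ and $\mathcal{I}_\theta$ coincide. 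Finally, $\max_k a_{n,k}=1/|I_n|\to 0$ as $n\to\infty$ thanks to the assumption $\lim_n |I_n|=\infty$, so the hypothesis of Theorem \ref{thm:Kline} is met and one concludes that $c(\mathcal{I}_\theta)=c(\mathcal{I}_A)$ does not admit a locally convex FK topology.

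Since the result reduces cleanly to a theorem already in hand, I do not anticipate a serious obstacle; the only points requiring some care are (i) matching the definition of $\mathcal{I}_\theta$ to the exact form of $\mathcal{I}_A$ and (ii) checking each Silverman--Toeplitz condition for this particular $A$. If instead one wanted to mirror Connor's original route for $c(\mathcal{Z})$ (exhibiting a bounded sequence in $c(\mathcal{I}_\theta)$ that prevents the existence of a convex neighborhood basis compatible with the coordinate functionals), one would have to construct explicit test elements on the intervals $I_n$, using $\lim_n|I_n|=\infty$ to force a failure of local convexity. The matrix-based reduction above avoids this combinatorial bookkeeping entirely.
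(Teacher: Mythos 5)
Your reduction is correct: the matrix $a_{n,k}:=\mathbf{1}_{I_n}(k)/|I_n|$ is nonnegative, has row sums equal to $1$, and, since the intervals $I_n$ are pairwise disjoint, each column is eventually zero, so the Silverman--Toeplitz conditions hold and $A$ is regular; moreover $\sum_{k\in S}a_{n,k}=\mu_n(S)$ gives $\mathcal{I}_A=\mathcal{I}_\theta$, and $\max_k a_{n,k}=1/|I_n|\to 0$ by the hypothesis $\lim_n|I_n|=\infty$, so Theorem \ref{thm:Kline} applies verbatim. This is, however, a different route from the one the paper has in mind. The statement is quoted from \cite{MR1758671}, where the argument follows Connor's and Kline's functional-analytic construction directly for the lacunary setting; and within the present paper the result is not reproved separately but subsumed under the tallness machinery: $\mathcal{I}_\theta$ is a generalized density ideal with $\max_k\mu_n(\{k\})=1/|I_n|\to 0$, hence it is tall, and Corollary \ref{cor:orhanplus} (equivalently Corollary \ref{cor:cI}, i.e.\ Theorem \ref{thm:characterizationtallfktopology}) yields the nonexistence of a locally convex FK topology. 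Your reduction buys economy -- no new construction at all, just a change of viewpoint identifying $\mathcal{I}_\theta$ as a matrix summability ideal, which works precisely because the submeasures $\mu_n$ here are genuine (normalized counting) measures -- while the paper's approach buys generality: the tallness criterion covers generalized density ideals whose submeasures are not additive, and simultaneously gives the converse direction (a locally convex FK topology exists exactly when the ideal is not tall), which no reduction to Theorem \ref{thm:Kline} can provide.
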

Notice that the ideal $\mathcal{I}_\theta$ defined in \eqref{eq:lacunaryideal} is a special type of \emph{generalized density ideal} in the sense of Farah, namely, an ideal of the type 
\begin{equation}\label{eq:densityidealdef}
\mathcal{I}_\varphi=\left\{S\subseteq \omega: \lim_{n\to \infty} \varphi_n(S)=0\right\},
\end{equation}
where $\varphi=(\varphi_n)$ is a sequence of submeasures $\varphi_n: \mathcal{P}(\omega) \to [0,\infty]$ (that is, monotone subadditive maps with $\varphi_n(\emptyset)=0$ and $\varphi_n(\{k\})<\infty$ for all $n,k \in \omega$) supported on finite pairwise disjoint sets, see \cite[Section 2.10]{MR1988247} and \cite{MR2254542}; 
as observed in \cite{MR4124855, MR3436368, MR4404626}, the theory of representability of certain analytic P-ideals may have some potential for the study of the geometry of Banach spaces. 

Lastly, Connor and Temiszu \cite{Connor22} recently proved the following variant: 
\begin{thm}
Let $\mathcal{I}$ be an ideal on $\omega$ with the property that there exists a partition $(I_n)$ of $\omega$ into consecutive finite intervals such that, if $|S\cap I_n|\le 1$ for all $n \in \omega$, then $S \in \mathcal{I}$. Then both  $c_0(\mathcal{I})$ and $\ell_\infty(\mathcal{I})$ do not admit 
a locally convex FK topology. 
\end{thm}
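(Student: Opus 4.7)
The natural plan is to deduce this statement as a corollary of the main result announced in the abstract: for any $V$ with $c_{00}(\mathcal{I})\subseteq V\subseteq \ell_\infty(\mathcal{I})$, $V$ admits a locally convex FK topology if and only if there exists an infinite set $S\subseteq\omega$ such that $[S]^{\omega}\cap\mathcal{I}=\emptyset$.

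Both $c_0(\mathcal{I})$ and $\ell_\infty(\mathcal{I})$ satisfy the sandwich $c_{00}(\mathcal{I})\subseteq V\subseteq \ell_\infty(\mathcal{I})$, so the main theorem applies to each. It therefore suffices to check that the partition hypothesis rules out the existence of such an $S$, that is, forces $\mathcal{I}$ to be \emph{tall}: every infinite subset of $\omega$ contains an infinite member of $\mathcal{I}$. Given an infinite $S\subseteq\omega$, since each $I_n$ is finite, $S$ meets $I_n$ for infinitely many $n$; picking one point $k_n\in S\cap I_n$ for each such $n$ produces an infinite set $T\subseteq S$ with $|T\cap I_n|\le 1$ for all $n\in\omega$, hence $T\in\mathcal{I}$ by hypothesis. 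The main theorem then immediately yields that neither $c_0(\mathcal{I})$ nor $\ell_\infty(\mathcal{I})$ can be a locally convex FK space.

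The entire content is thus concentrated in the main theorem, whose nontrivial ``only if'' direction is the real obstacle. The natural route there is a gliding-hump construction: assume a locally convex FK topology on $V$ generated by an increasing sequence of continuous seminorms $(q_m)$, use continuity of the coordinate functionals to normalize so that $|x(k)|\le q_k(x)$ for every $x\in V$ and $k\in\omega$, and then exploit tallness of $\mathcal{I}$ to inductively build a series whose partial sums form a Cauchy sequence in $V$ while its coordinatewise limit fails to lie in $V$, contradicting completeness. Invoking the closed graph theorem along the way---for instance to embed $\ell_\infty$ continuously into $V$ whenever $V\supseteq\ell_\infty$---allows one to bound the seminorms of the relevant characteristic functions uniformly and frames the contradiction cleanly.
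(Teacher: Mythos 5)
Your reduction is correct and is exactly how the paper treats this statement: both $c_0(\mathcal{I})$ and $\ell_\infty(\mathcal{I})$ sit between $c_{00}(\mathcal{I})$ and $\ell_\infty(\mathcal{I})$, and your selector argument (one point of $S$ from each interval it meets) correctly shows that the partition hypothesis forces $\mathcal{I}$ to be tall, so Theorem \ref{thm:characterizationtallfktopology} (equivalently its negative half, Theorem \ref{thm:newnotllocallyconvex}) applies verbatim; note that $\ell_\infty(\mathcal{I})\subsetneq\mathbf{R}^\omega$ is automatic since a proper ideal contains no cofinite set. Where you go on to sketch the proof of the hard direction of the characterization, however, your route differs from the paper's and is only a plan. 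The paper does not normalize seminorms, build a Cauchy series of partial sums, or invoke completeness and the closed graph theorem; it uses the Bennett--Kalton criterion (Theorem \ref{thm:bennetkalton}): a dense proper subspace $V\subseteq\mathbf{R}^\omega$ admits a locally convex FK topology if and only if there are $B\subseteq c_{00}$ and $y\in\mathbf{R}^\omega$ with $\sup_{x\in B}|x\cdot y|=\infty$ while $\sup_{x\in B}|x\cdot v|<\infty$ for all $v\in V$. Given such a $B$ with $\sup_{x\in B}|x\cdot y|=\infty$, a gliding-hump construction on the dual side then produces $v\in c_{00}(\mathcal{I})\subseteq V$, supported on an infinite member of $\mathcal{I}$ supplied by tallness, with $\sup_{x\in B}|x\cdot v|=\infty$. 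This buys generality your sketch would struggle with: Theorem \ref{thm:newnotllocallyconvex} assumes only $c_{00}(\mathcal{I})\subseteq V\subsetneq\mathbf{R}^\omega$, so one cannot in general arrange a coordinatewise limit lying ``outside $V$'', and your parenthetical device of embedding $\ell_\infty$ into $V$ already fails for $V=c_0(\mathcal{I})$. In short: if you take the paper's characterization as given, your proof of the stated theorem is complete and essentially the paper's; if your sketch is meant as a self-contained substitute for that characterization, it is not yet a proof.
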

A combinatorial characterization (with respect to the Kat\u{e}tov--Blass order) of ideals $\mathcal{I}$ which fulfill the above property can be found in \cite{MR2648159}. 



\section{Main results}\label{sec:mainresults} 

The aim of this work is to
close the above line of results introduced in Section \ref{sec:intro} 
by proving, in particular, that the corresponding (negative) analogues hold if and only if $\mathcal{I}$ is tall (recall that an ideal $\mathcal{I}$ is said to be tall if every infinite set contains an infinite subset in $\mathcal{I}$, or, equivalently, if $\mathcal{I}$ is dense in the Cantor space $\{0,1\}^\omega$), see Theorem  \ref{thm:characterizationtallfktopology} below. The proofs of all results follow in Section \ref{sec:proofs}.

On the negative side, we have the following:
\begin{thm}\label{thm:newnotllocallyconvex}
Let $\mathcal{I}$ be a tall ideal on $\omega$ and $V$ be a proper vector subspace of $\mathbf{R}^\omega$ which contains $c_{00}(\mathcal{I})$. 
Then $V$ does not admit a locally convex FK topology. 
\end{thm}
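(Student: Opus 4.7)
The plan is to argue by contradiction: assuming $V$ admits a locally convex FK topology $\tau$, I will show that $V=\mathbf{R}^\omega$, contradicting properness. Let $\tau$ be generated by continuous seminorms $(p_k)_{k\in\omega}$, and write $\tau_\pi$ for the product topology on $\mathbf{R}^\omega$. The first step is to exploit the Open Mapping Theorem: for every $A\in\mathcal{I}$, the subspace $\mathbf{R}^A:=\{x\in\mathbf{R}^\omega:\mathrm{supp}(x)\subseteq A\}$ lies in $c_{00}(\mathcal{I})\subseteq V$ by hereditariness, and is $\tau$-closed in $V$ as an intersection of kernels of the continuous coordinate functionals $x\mapsto x(i)$ for $i\notin A$. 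Thus both $(\mathbf{R}^A,\tau|_{\mathbf{R}^A})$ and $(\mathbf{R}^A,\tau_\pi|_{\mathbf{R}^A})$ are Fr\'echet spaces, and the identity between them is a continuous bijection ($\tau$ being finer than $\tau_\pi|_V$ since $V\hookrightarrow\mathbf{R}^\omega$ is continuous). The Open Mapping Theorem forces the two topologies on $\mathbf{R}^A$ to coincide, so each $p_k|_{\mathbf{R}^A}$ is a continuous seminorm in the product topology, hence bounded by a finite-coordinate expression $p_k(x)\le C_{k,A}\max_{i\in F_{k,A}}|x(i)|$ with $F_{k,A}\subseteq A$ finite and $C_{k,A}\ge 0$.

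The crucial step, where tallness enters, is to prove that $G_k:=\{n\in\omega:p_k(e_n)>0\}$ is finite for every $k$, where $e_n$ denotes the $n$-th standard basis vector. If $G_k$ were infinite, tallness of $\mathcal{I}$ would produce an infinite $A\in\mathcal{I}$ with $A\subseteq G_k$; but for any $n\in A\setminus F_{k,A}$ (a non-empty set, since $A$ is infinite and $F_{k,A}$ is finite) the finite-coordinate bound above gives $p_k(e_n)\le C_{k,A}\max_{i\in F_{k,A}}|e_n(i)|=0$, contradicting $n\in G_k$. Once each $G_k$ is finite, a further step shows that $p_k|_{\mathbf{R}^B}\equiv 0$ whenever $B\in\mathcal{I}$ satisfies $B\cap G_k=\emptyset$: the seminorm vanishes on every $e_n$ with $n\in B$, hence on the finitely supported elements of $\mathbf{R}^B$, and therefore, by continuity in the product topology, on all of $\mathbf{R}^B$. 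Applied to $B=\mathrm{supp}(x)\setminus G_k\in\mathcal{I}$ for an arbitrary $x\in c_{00}(\mathcal{I})$, this yields $p_k(x\chi_{\omega\setminus G_k})=0$, and so $p_k(x)=p_k(x\chi_{G_k})\le D_k\max_{n\in G_k}|x(n)|$ with $D_k:=\sum_{n\in G_k}p_k(e_n)$. Consequently $\tau|_{c_{00}(\mathcal{I})}$ is coarser than $\tau_\pi|_{c_{00}(\mathcal{I})}$, and combined with the reverse containment the two topologies agree on $c_{00}(\mathcal{I})$.

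To close, pick any $y\in\mathbf{R}^\omega$ and set $x_n:=y\chi_{[0,n]}\in c_{00}\subseteq c_{00}(\mathcal{I})$. For each $k$, as soon as $n,m\ge\max G_k$ one has $x_n(i)=x_m(i)=y(i)$ for every $i\in G_k$, so $p_k(x_n-x_m)\le D_k\max_{i\in G_k}|x_n(i)-x_m(i)|=0$; hence $(x_n)$ is $\tau$-Cauchy, and $\tau$-completeness of $V$ produces a limit $x\in V$, which must equal $y$ pointwise by continuity of coordinate functionals. Thus $y\in V$, and since $y$ was arbitrary, $V=\mathbf{R}^\omega$, contradicting the hypothesis that $V$ is a proper subspace. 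I expect the main obstacle to be the finiteness of $G_k$ established in the second paragraph: this is the single place where the tallness of $\mathcal{I}$ is used, and it is precisely what upgrades the per-$A$ ``finite coordinate'' description of the seminorms (automatic from the Open Mapping Theorem on each $\mathbf{R}^A$) into a uniform finite-dimensional description valid on all of $c_{00}(\mathcal{I})$.
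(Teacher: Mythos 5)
Your proof is correct, but it takes a genuinely different route from the paper's. The paper argues through the Bennett--Kalton type characterization (its Theorem 3.1): a dense proper subspace $V\subseteq\mathbf{R}^\omega$ admits a locally convex FK topology iff some $B\subseteq c_{00}$ and $y\in\mathbf{R}^\omega$ satisfy $\sup_{x\in B}|x\cdot y|=\infty$ while $\sup_{x\in B}|x\cdot v|<\infty$ for all $v\in V$; tallness is then used in a gliding-hump construction, selecting an infinite $S\in\mathcal{I}$ inside a carefully chosen set of coordinates $\{k_n\}$ and building, recursively, a multiplier $v\in c_{00}(\mathcal{I})$ with $\sup_{x\in B}|x\cdot v|=\infty$. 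You instead work directly with a hypothetical Fr\'echet topology $\tau$ on $V$: the Open Mapping Theorem on the $\tau$-closed subspaces $\mathbf{R}^A$, $A\in\mathcal{I}$, forces $\tau$ to agree with the product topology there, tallness upgrades this to the finiteness of each $G_k=\{n:p_k(e_n)>0\}$ and hence to the bound $p_k(x)\le D_k\max_{n\in G_k}|x(n)|$ on all of $c_{00}(\mathcal{I})$, and completeness plus coordinatewise continuity then absorbs every $y\in\mathbf{R}^\omega$ into $V$, contradicting properness. All the individual steps check out (closedness of $\mathbf{R}^A$ via continuity of coordinates, the finite-coordinate domination of product-continuous seminorms, the density argument giving $p_k|_{\mathbf{R}^B}\equiv 0$ for $B\in\mathcal{I}$ disjoint from $G_k$, and the Cauchy argument at the end). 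What each approach buys: the paper's route reuses the same duality criterion for the positive direction (its Theorem 2.2), giving a unified two-way treatment within summability-style sequence-space arguments, whereas your argument is self-contained classical functional analysis, needs no density hypothesis or duality machinery, and yields the stronger structural statement that any locally convex FK topology on such a $V$ must restrict to the topology of coordinatewise convergence on $c_{00}(\mathcal{I})$, which is incompatible with completeness unless $V=\mathbf{R}^\omega$.
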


On the positive side, however, we have:
\begin{thm}\label{thm:positiveside}
Let $\mathcal{I}$ be a nontall ideal on $\omega$ and $V$ be a dense vector subspace of $\mathbf{R}^\omega$ contained in $\ell_\infty(\mathcal{I})$. Then $V$ admits a locally convex FK topology. 
\end{thm}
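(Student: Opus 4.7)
Since $\mathcal{I}$ is nontall, I fix an infinite set $S \subseteq \omega$ whose subsets belonging to $\mathcal{I}$ are all finite. The first observation is that for each $x \in V$, using $V \subseteq \ell_\infty(\mathcal{I})$, there exists $M > 0$ with $\{n : |x(n)| > M\} \in \mathcal{I}$; intersecting this set with $S$ yields a finite set, whence $p(x) := \sup_{n \in S}|x(n)| < \infty$. Therefore $p$ is a well-defined seminorm on $V$.

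My plan is to equip $V$ with the locally convex topology $\tau$ generated by the countable family of seminorms $\{p\} \cup \{q_k : k \in \omega\}$, where $q_k(x) := |x(k)|$. Since the $q_k$'s alone already separate points, $\tau$ is Hausdorff; the countability of the family makes $(V,\tau)$ metrizable; and local convexity is automatic. Moreover, the inclusion $(V,\tau) \hookrightarrow \mathbf{R}^\omega$ is continuous because the product topology on $\mathbf{R}^\omega$ is generated by $(q_k)_{k \in \omega}$.

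The crucial step is verifying completeness of $(V,\tau)$. For a $\tau$-Cauchy sequence $(x_j)_j$ in $V$, the $q_k$-Cauchyness yields a pointwise limit $y \in \mathbf{R}^\omega$, and the $p$-Cauchy condition forces $(x_j|_S)_j$ to converge uniformly on $S$, necessarily to $y|_S$ by uniqueness of pointwise limits. Thus $x_j \to y$ in the Fr\'{e}chet ambient space $W := \{w \in \mathbf{R}^\omega : \sup_{n \in S}|w(n)| < \infty\}$, equipped with the topology generated by $(q_k)_{k \in \omega}$ together with (the natural extension of) $p$, which is Fr\'{e}chet as a product of a Banach and a Fr\'{e}chet factor along $S$ and $S^c$ respectively.

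The main obstacle will be to prove $y \in V$, i.e., that $V$ is $\tau$-closed in $W$. I anticipate that this closedness step combines the density of $V$ in $\mathbf{R}^\omega$, the inclusion $V \subseteq \ell_\infty(\mathcal{I})$, and the combinatorial properties of the witness $S$ to exclude pathological $\tau$-limit points falling outside $V$; this is the only substantive hurdle in the argument, the remaining verifications being routine from the axioms of an FK space.
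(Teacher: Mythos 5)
Your preliminary observations are correct and, in fact, coincide with the key estimate of the paper: for $v\in\ell_\infty(\mathcal{I})$ the set $A\cap S$ is finite, whence $\sup_{n\in S}|v(n)|<\infty$. But the proof stops exactly where the real work begins: the completeness of $(V,\tau)$, which you yourself call ``the only substantive hurdle'', is asserted as an anticipated step and never proved. Moreover, this step would genuinely fail for the topology you propose: $V$ need not be $\tau$-closed in $W\cong\ell_\infty(S)\times\mathbf{R}^{\omega\setminus S}$. Take $\mathcal{I}=\mathrm{Fin}$ (which is not tall) and $V=c_{00}$, a dense subspace of $\mathbf{R}^\omega$ contained in $\ell_\infty(\mathrm{Fin})=\ell_\infty$: the sequence $v_j:=\sum_{i\le j}2^{-i}e_{s_i}$ is $\tau$-Cauchy, yet its only candidate limit (the pointwise one) has infinite support, so it lies outside $V$ and $(V,\tau)$ is not complete. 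No alternative choice of seminorms can rescue the plan either, since $c_{00}$ has countably infinite Hamel dimension and therefore, by Baire category, admits no completely metrizable vector topology at all; so a strategy that tries to endow an arbitrary dense $V\subseteq\ell_\infty(\mathcal{I})$ directly with a Fr\'{e}chet topology cannot succeed in this generality.

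This is precisely why the paper argues through the duality criterion of Theorem~\ref{thm:bennetkalton} instead of constructing a topology on $V$: it exhibits $B:=\left\{\sum_{i\le n}2^{-i}e_{s_i}:n\in\omega\right\}\subseteq c_{00}$ and $y:=(2^n)_{n\in\omega}$, checks $\sup_{x\in B}|x\cdot y|=\infty$, and shows $\sup_{x\in B}|x\cdot v|<\infty$ for every $v\in V$ --- the latter bound being exactly your finiteness-of-$A\cap S$ computation. In effect, your seminorm $p$ and the paper's set $B$ encode the same quantitative information, but the Bennett--Kalton route needs only this boundedness statement and never requires producing a complete topology on $V$ itself; supplying that missing mechanism (or an equivalent barrelledness argument) is what your proposal lacks.
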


As a consequence, thanks to Theorem \ref{thm:newnotllocallyconvex} and Theorem \ref{thm:positiveside}, we have the claimed characterization which extends all the results given in Section \ref{sec:intro}:
\begin{thm}\label{thm:characterizationtallfktopology}
Let $\mathcal{I}$ be an ideal on $\omega$ and $V$ be a vector space such that 
$$
c_{00}(\mathcal{I})\subseteq V\subseteq \ell_\infty(\mathcal{I}).
$$
Then $V$ admits a locally convex FK topology if and only if $\mathcal{I}$ is not tall. 
\end{thm}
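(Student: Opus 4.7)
The plan is to derive Theorem \ref{thm:characterizationtallfktopology} directly from Theorems \ref{thm:newnotllocallyconvex} and \ref{thm:positiveside} by checking, in each direction, that the sandwich hypothesis $c_{00}(\mathcal{I})\subseteq V\subseteq \ell_\infty(\mathcal{I})$ places us in the scope of the appropriate result. This reduces the problem to a short verification of boundary conditions on $V$ inside $\mathbf{R}^\omega$.

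For the ``only if'' direction (in contrapositive form: if $\mathcal{I}$ is tall then $V$ does not admit a locally convex FK topology) I would invoke Theorem \ref{thm:newnotllocallyconvex}, after verifying that $V$ is a \emph{proper} subspace of $\mathbf{R}^\omega$. The inclusion $V\subseteq\ell_\infty(\mathcal{I})$ together with $\ell_\infty(\mathcal{I})\subsetneq\mathbf{R}^\omega$ suffices; for the strict inclusion I would exhibit the witness $x(n)=n$, which cannot belong to $\ell_\infty(\mathcal{I})$, since otherwise $\{n:n>M\}\in\mathcal{I}$ for some $M$, and combined with $\{0,1,\dots,M\}\in\mathrm{Fin}\subseteq\mathcal{I}$ this would force $\omega\in\mathcal{I}$, contradicting the properness of $\mathcal{I}$.

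For the ``if'' direction (if $\mathcal{I}$ is not tall, then $V$ admits a locally convex FK topology) I would invoke Theorem \ref{thm:positiveside}, after verifying that $V$ is \emph{dense} in $\mathbf{R}^\omega$ under the pointwise convergence topology. Here I would use $c_{00}(\mathcal{I})\subseteq V$ together with the observation that $c_{00}(\mathcal{I})$ contains every finitely supported sequence, because $\mathrm{Fin}\subseteq\mathcal{I}$; and finitely supported sequences are manifestly dense in $\mathbf{R}^\omega$.

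No genuine obstacle is expected: all the mathematical content of the characterization has been packed into the two preceding theorems, and the remaining argument amounts to confirming the two boundary conditions $V\subsetneq\mathbf{R}^\omega$ and $\overline{V}=\mathbf{R}^\omega$. The only non-cosmetic observation is the properness of $\ell_\infty(\mathcal{I})$ inside $\mathbf{R}^\omega$ for any proper ideal extending $\mathrm{Fin}$, and this is immediate from the definitions.
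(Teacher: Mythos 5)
Your proposal is correct and matches the paper's own (implicit) derivation: the theorem is obtained exactly by combining Theorem \ref{thm:newnotllocallyconvex} and Theorem \ref{thm:positiveside}, with the same two boundary checks — properness of $V$ via $\ell_\infty(\mathcal{I})\subsetneq\mathbf{R}^\omega$ (your witness $x(n)=n$ works since $\mathrm{Fin}\subseteq\mathcal{I}$ and $\mathcal{I}$ is proper), and density of $V$ via $c_{00}\subseteq c_{00}(\mathcal{I})\subseteq V$. Nothing further is needed.
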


As we are going to see, 
our main results have a number of consequences.

\subsection{Special spaces}
To start with, we get immediately by Theorem \ref{thm:characterizationtallfktopology}:
\begin{cor}\label{cor:cI}
Let $\mathcal{I}$ be an ideal on $\omega$. Then $c(\mathcal{I})$ admits a locally convex FK topology if and only if $\mathcal{I}$ is not tall.
\end{cor}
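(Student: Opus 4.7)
The plan is to obtain this as a direct application of Theorem \ref{thm:characterizationtallfktopology} to the vector space $V := c(\mathcal{I})$. For this, the only thing to verify is the sandwich
$$
c_{00}(\mathcal{I}) \subseteq c(\mathcal{I}) \subseteq \ell_\infty(\mathcal{I}).
$$

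For the left inclusion, I would observe that if $x \in c_{00}(\mathcal{I})$ then $\mathrm{supp}\,x \in \mathcal{I}$, so for every $\varepsilon>0$ the set $\{n \in \omega: |x(n)|>\varepsilon\}$ is contained in $\mathrm{supp}\,x$ and hence lies in $\mathcal{I}$ by hereditariness; thus $\mathcal{I}\text{-}\lim x = 0$ and $x \in c(\mathcal{I})$. For the right inclusion, if $x$ is $\mathcal{I}$-convergent to some $\eta \in \mathbf{R}$, then $\{n : |x(n)-\eta|>1\} \in \mathcal{I}$, and since $\{n : |x(n)|>|\eta|+1\}$ is contained in this set, it also belongs to $\mathcal{I}$; this witnesses that $x \in \ell_\infty(\mathcal{I})$ with bound $M=|\eta|+1$.

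With both inclusions in hand, Theorem \ref{thm:characterizationtallfktopology} applies verbatim and yields that $c(\mathcal{I})$ admits a locally convex FK topology if and only if $\mathcal{I}$ is not tall. There is no real obstacle here: all the substance lies in Theorems \ref{thm:newnotllocallyconvex} and \ref{thm:positiveside}, and the corollary is a one-line consequence once the elementary set-theoretic inclusions above are recorded.
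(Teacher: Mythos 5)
Your proposal is correct and coincides with the paper's route: the corollary is stated there as an immediate consequence of Theorem \ref{thm:characterizationtallfktopology}, resting on exactly the sandwich $c_{00}(\mathcal{I})\subseteq c(\mathcal{I})\subseteq \ell_\infty(\mathcal{I})$, and your verification of both inclusions is accurate.
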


In particular, this proves that $c(\emptyset \times \mathrm{Fin})$ is a locally convex FK space, which is the first nontrivial positive example of a $c(\mathcal{I})$ space admitting such well-behaved topology. Here, we recall that the Fubini product $\emptyset\times \mathrm{Fin}$ is the ideal on $\omega^2$ which is isomorphic,\footnote{Given ideals $\mathcal{I}_1$ and $\mathcal{I}_2$ on two countably infinite sets $\Omega_1$ and $\Omega_2$, respectively, we say that $\mathcal{I}_1$ is isomorphic to $\mathcal{I}_2$ if there exists a bijection $f: \Omega_1\to \Omega_2$ such that $f[S] \in \mathcal{I}_2$ if and only if $S \in \mathcal{I}_1$.} e.g., to the ideal $\{S\subseteq \omega: \forall k \in \omega, \{n \in S: \nu_2(n)=k\}\in \mathrm{Fin}\}$ on $\omega$, where $\nu_2(n)$ stands for the $2$-adic valutation of a positive integer $n$ and $\nu_2(0):=0$, cf. \cite[Section 1.2]{MR1711328}.

Recall also that an ideal $\mathcal{I}$ on $\omega$ is said to be a P-ideal if for every sequence $(S_n) \in \mathcal{I}^\omega$ there exists $S \in \mathcal{I}$ such that $S_n \setminus S$ is finite for all $n \in \omega$; in addition, for each $E\notin \mathcal{I}$, we write 
$$
\mathcal{I}\upharpoonright E:=\left\{S\cap E: S \in \mathcal{I}\right\}
$$
for the restriction of $\mathcal{I}$ on $E$. Accordingly, we have the following:
\begin{cor}\label{cor:restriction}
Let $\mathcal{I}$ be an analytic P-ideal on $\omega$ which is not isomorphic to $\mathrm{Fin}$ or $\emptyset\times \mathrm{Fin}$. Then $c(\mathcal{I}\upharpoonright E)$ does not admit a locally convex FK topology for some $E\notin \mathcal{I}$.
\end{cor}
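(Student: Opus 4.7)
The plan is to combine Corollary~\ref{cor:cI} with a structural dichotomy for analytic P-ideals: by that corollary, $c(\mathcal{I}\upharpoonright E)$ fails to admit a locally convex FK topology exactly when $\mathcal{I}\upharpoonright E$ is a tall ideal on $E$. Hence it suffices to prove that whenever $\mathcal{I}$ admits no $E\notin\mathcal{I}$ with $\mathcal{I}\upharpoonright E$ tall, $\mathcal{I}$ is, modulo some set in $\mathcal{I}$, isomorphic to $\mathrm{Fin}$ or to $\emptyset\times\mathrm{Fin}$.

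If $\mathcal{I}$ is itself tall, take $E:=\omega$. Otherwise the family
\[
\mathcal{F}:=\{A\subseteq\omega : A\text{ infinite and every infinite }B\subseteq A\text{ lies outside }\mathcal{I}\}
\]
is nonempty, and a standard recursion produces a countable maximal pairwise almost-disjoint subfamily $\{A_n : n\in N\}\subseteq\mathcal{F}$. Put $B:=\omega\setminus\bigsqcup_{n\in N}A_n$. If $B\notin\mathcal{I}$, the maximality of $\{A_n\}$ forces every infinite subset of $B$ to contain an infinite member of $\mathcal{I}$, so that $\mathcal{I}\upharpoonright B$ is tall on $B$ and $E:=B$ works.

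The remaining case is $B\in\mathcal{I}$. Here one aims to identify $\mathcal{I}$ modulo $B$ with the ideal $\{S\subseteq\bigsqcup_n A_n : |S\cap A_n|<\infty\text{ for every }n\in N\}$: the obvious bijection between $\bigsqcup_{n\in N}A_n$ and $\omega$ then displays $\mathcal{I}$ as $\mathrm{Fin}$ when $|N|<\omega$ and as $\emptyset\times\mathrm{Fin}$ when $|N|=\omega$, contradicting the hypothesis. The main obstacle is the nontrivial inclusion, namely to show that every $S\subseteq\omega\setminus B$ with $|S\cap A_n|<\infty$ for all $n\in N$ belongs to $\mathcal{I}$; here I would invoke Solecki's representation $\mathcal{I}=\mathrm{Exh}(\varphi)$ for a lower semicontinuous submeasure $\varphi$ (available since $\mathcal{I}$ is an analytic P-ideal). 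Arguing by contradiction, if $S\notin\mathcal{I}$ then a thinning based on $\varphi$ combined with $|S\cap A_n|<\infty$ extracts an infinite $S'\subseteq S$ with $\mathcal{I}\upharpoonright S'=\mathrm{Fin}$ and almost disjoint from every $A_n$; this $S'$ could be added to $\{A_n : n\in N\}$, contradicting its maximality.
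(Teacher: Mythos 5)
Your reduction is the same as the paper's: by Corollary \ref{cor:cI} everything comes down to producing some $E\notin\mathcal{I}$ with $\mathcal{I}\upharpoonright E$ tall, and the paper gets this in one line by quoting Farah's dichotomy \cite[Corollary 1.2.11]{MR1711328}. You instead try to prove that dichotomy from scratch (the cases ``$\mathcal{I}$ tall'' and ``$B\notin\mathcal{I}$'' are handled correctly), but the remaining argument has two genuine gaps. First, the assertion that ``a standard recursion produces a countable maximal pairwise almost-disjoint subfamily of $\mathcal{F}$'' is unsupported: an $\omega$-stage recursion need not terminate in a maximal family, Zorn gives maximality with no cardinality control, and countability is precisely where the analytic P-ideal hypothesis must enter. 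Your sketch never really uses P-ness: Solecki's representation is invoked only for the ``hard inclusion'', but that step already follows from the standing assumption that no restriction of $\mathcal{I}$ is tall (if $S\notin\mathcal{I}$, non-tallness of $\mathcal{I}\upharpoonright S$ directly yields an infinite $S'\subseteq S$ with $S'\in\mathcal{F}$, and $S'$ is automatically almost disjoint from every $A_n$ since $S$ meets each $A_n$ finitely). So, as written, the argument would prove the dichotomy for every ideal without tall restrictions, which is false: the ideal on $\omega\times\omega$ generated by $\mathrm{Fin}$ together with the columns has no tall restriction, yet it admits no countable maximal almost-disjoint subfamily of $\mathcal{F}$, because for countable $\mathcal{A}$ the ideal $\{S: |S\cap A|<\infty \text{ for all } A\in\mathcal{A}\}$ is a P-ideal, which that ideal is not. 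Supplying the countability is essentially the content of the Farah result the paper cites.

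Second, the concluding step ``contradicting the hypothesis'' does not follow, because what you obtain is that $\mathcal{I}\upharpoonright(\omega\setminus B)$ is isomorphic to $\mathrm{Fin}$ or $\emptyset\times\mathrm{Fin}$ with $B\in\mathcal{I}$, whereas the hypothesis concerns $\mathcal{I}$ itself. When $N$ and $B$ are both infinite one can absorb $B$, since $(\emptyset\times\mathrm{Fin})\oplus\mathcal{P}(\omega)$ is isomorphic to $\emptyset\times\mathrm{Fin}$ (map the extra copy of $\omega$ onto a selector of the pieces), but this needs to be said and proved. When $N$ is finite and $B$ is infinite you get $\mathcal{I}\cong\mathrm{Fin}\oplus\mathcal{P}(\omega)$, which is isomorphic to neither $\mathrm{Fin}$ (it has infinite members) nor $\emptyset\times\mathrm{Fin}$ (it is generated modulo finite sets by a single set, while $\emptyset\times\mathrm{Fin}$ is not), so no contradiction is available; and none can be forced, since $\mathrm{Fin}\oplus\mathcal{P}(\omega)$ is an analytic P-ideal, not isomorphic to $\mathrm{Fin}$ or $\emptyset\times\mathrm{Fin}$, whose restriction $\mathcal{I}\upharpoonright E$ is non-tall for every $E\notin\mathcal{I}$. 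This is exactly the exceptional ideal surfacing in the paper's parenthetical remark $(\mathrm{Fin}\oplus\mathcal{P}(\omega))^\perp=\mathrm{Fin}$; your route makes visible that this case must be excluded or treated separately, rather than dismissed by the claimed contradiction.
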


\begin{cor}\label{cor:maximalideal}
Let $\mathcal{I}$ be a nonmeager ideal on $\omega$. Then $c(\mathcal{I})$ does not admit a locally convex FK topology.
\end{cor}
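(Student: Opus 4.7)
The plan is to obtain Corollary \ref{cor:maximalideal} as a one-line consequence of Theorem \ref{thm:newnotllocallyconvex} applied to $V := c(\mathcal{I})$. For this reduction to work I must verify three facts: (i) $\mathcal{I}$ is tall, (ii) $c_{00}(\mathcal{I}) \subseteq c(\mathcal{I})$, and (iii) $c(\mathcal{I})$ is a proper subspace of $\mathbf{R}^\omega$.

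Facts (ii) and (iii) are routine. For (ii), if $\mathrm{supp}\,x \in \mathcal{I}$ then $\{n : |x(n)|>\varepsilon\} \subseteq \mathrm{supp}\,x$ lies in $\mathcal{I}$ for every $\varepsilon>0$, so $x \in c_0(\mathcal{I}) \subseteq c(\mathcal{I})$. For (iii), the sequence $x(n)=n$ fails to be $\mathcal{I}$-convergent to any $\eta \in \mathbf{R}$, since $\mathcal{I}$ is a proper ideal containing $\Fin$ and $\{n : |n-\eta|>1\}$ is cofinite, hence forced out of $\mathcal{I}$.

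The heart of the argument is therefore (i): every nonmeager ideal on $\omega$ is tall. I plan to prove this by contraposition. If $\mathcal{I}$ is not tall, fix an infinite $A\subseteq \omega$ all of whose infinite subsets lie outside $\mathcal{I}$; equivalently, $\mathcal{I} \cap \mathcal{P}(A) \subseteq \Fin(A)$. Under the canonical homeomorphism $\mathcal{P}(\omega) \simeq \mathcal{P}(A) \times \mathcal{P}(\omega\setminus A)$ of Cantor spaces, this forces
\[
\mathcal{I} \;\subseteq\; \Fin(A) \times \mathcal{P}(\omega\setminus A).
\]
Since $\Fin(A)=\bigcup_{n\in\omega}[A]^{\le n}$ is a countable union of closed, nowhere-dense subsets of $\mathcal{P}(A)$, it is meager; and a product of a meager set with any topological space remains meager in the ambient product. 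Hence $\mathcal{I}$ itself is meager, contradicting the hypothesis.

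The only mildly delicate point I anticipate is the product-space argument above, which rests on the standard fact that $M \times Y$ stays meager in $X \times Y$ whenever $M\subseteq X$ is meager. Everything else is bookkeeping, and combining (i)--(iii) with Theorem \ref{thm:newnotllocallyconvex} yields the conclusion.
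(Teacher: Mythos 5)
Your proposal is correct and follows essentially the same route as the paper: the paper likewise reduces the statement to tallness of $\mathcal{I}$ (via Corollary \ref{cor:cI}, whose negative half is just Theorem \ref{thm:newnotllocallyconvex} applied to $V=c(\mathcal{I})$), citing \cite[Proposition 2.4]{MR4288216} for the fact that every nonmeager ideal is tall. The only difference is that you prove this fact yourself rather than citing it, and your Baire-category argument (not tall $\Rightarrow$ $\mathcal{I}\subseteq \Fin(A)\times\mathcal{P}(\omega\setminus A)$ under the canonical homeomorphism, a meager set) is sound, as are the routine checks that $c_{00}(\mathcal{I})\subseteq c(\mathcal{I})\subsetneq \mathbf{R}^\omega$.
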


In addition, we can characterize the family of nonnegative regular matrices $A$ for which $c(\mathcal{I}_A)$ has this property, where $\mathcal{I}_A$ has been defined in \eqref{eq:definitionIA}. 
Recall that a double real sequence $(a_{n,k}: n,k \in \omega)$ has Pringsheim limit $\eta \in \mathbf{R}$, shortened as $\mathrm{P}\text{-}\lim a_{n,k}=\eta$ if for each $\varepsilon>0$ there exists $n_0 \in \omega$ such that $|a_{n,k}-\eta|<\varepsilon$ for all $n,k\ge n_0$, see e.g. \cite[Section 4.2]{MR3955010}. 
In particular, if $A=(a_{n,k})$ is a nonnegative regular matrix (so that, in particular, $\lim_n a_{n,k}=0$ for all $k \in \omega$), the property $\lim_n\max_k a_{n,k}=0$ of Theorem \ref{thm:Kline} can be rewritten equivalently as $\mathrm{P}\text{-}\lim a_{n,k}=0$. 
\begin{cor}\label{cor:characterizationregularmatrices}
Let $A$ be a nonnegative regular matrix. Then $c(\mathcal{I}_A)$ admits a locally convex FK topology if and only if 
$\limsup_n \max_k a_{n,k}>0$. 
\end{cor}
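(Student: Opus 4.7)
Plan. By Corollary~\ref{cor:cI}, $c(\mathcal{I}_A)$ admits a locally convex FK topology if and only if $\mathcal{I}_A$ is not tall. Since $\limsup_n\max_k a_{n,k}>0$ is simply the negation of $\lim_n\max_k a_{n,k}=0$, the task reduces to proving
$$
\mathcal{I}_A \text{ is tall} \iff \lim_n\max_k a_{n,k}=0.
$$

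For the forward direction I will argue by contrapositive. Assuming $\limsup_n\max_k a_{n,k}=\alpha>0$, I extract a strictly increasing $(n_m)$ together with indices $(k_m)$ satisfying $a_{n_m,k_m}\ge \alpha/2$. Regularity forces $\lim_n a_{n,k}=0$ for each fixed $k$, so no single column can appear in the list $(k_m)$ infinitely often; after a refinement I may assume $k_0<k_1<\cdots$. The infinite set $E:=\{k_m:m\in\omega\}$ will then witness that $\mathcal{I}_A$ is not tall, since any infinite $S\subseteq E$ contains $k_m$ for infinitely many $m$, whence $\sum_{k\in S}a_{n_m,k}\ge a_{n_m,k_m}\ge \alpha/2$ and $S\notin \mathcal{I}_A$.

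For the backward direction, I fix an arbitrary infinite $E\subseteq \omega$ and construct an infinite $S\subseteq E$ with $S\in \mathcal{I}_A$ by a diagonal induction. Two one-variable convergences are available: $a_{n,k}\to 0$ as $n\to\infty$ for each fixed $k$ (regularity); and, since $\sup_n\sum_k a_{n,k}<\infty$, also $a_{n,k}\to 0$ as $k\to\infty$ for each fixed $n$. Using these together with the hypothesis $\max_k a_{n,k}\to 0$, I will pick simultaneously $k_0<k_1<\cdots$ in $E$ and $N_0<N_1<\cdots$ in $\omega$ such that, for every $m\ge 0$: (i)~$\max_k a_{n,k}<2^{-m}$ for $n\ge N_m$; (ii)~$\sum_{j<m}a_{n,k_j}<2^{-m}$ for $n\ge N_m$; and (iii)~$a_{n,k_m}<2^{-m}$ for $n\le N_m$. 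For $n\in[N_M,N_{M+1})$ I split $\sum_j a_{n,k_j}$ into $j<M$, $j=M$, $j>M$: condition (ii) bounds the first part by $2^{-M}$; condition (i) bounds the middle term by $2^{-M}$; and for $j\ge M+1$ one has $n<N_{M+1}\le N_j$, so (iii) at step $j$ gives $a_{n,k_j}<2^{-j}$ and the third part is at most $2^{-M}$. Hence $\sum_{k\in S}a_{n,k}\le 3\cdot 2^{-M}\to 0$ as $n\to\infty$, and $S\in \mathcal{I}_A$.

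The delicate aspect of the construction is that at the moment one freezes $N_M$, the indices $k_{M+1},k_{M+2},\ldots$ have not yet been chosen, and condition (iii) at those \emph{future} steps must retroactively control their contribution to the rows $n\in[N_M,N_{M+1})$. The monotonicity $N_M<N_{M+1}\le N_j$ is exactly what bridges this gap; the remainder of the argument is bookkeeping to check that the three inductive requirements are simultaneously achievable at each step.
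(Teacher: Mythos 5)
Your proposal is correct, and its skeleton is exactly the paper's: reduce via Corollary~\ref{cor:cI} to the statement that $\mathcal{I}_A$ is tall if and only if $\lim_n\max_k a_{n,k}=0$. The only difference is how that equivalence is handled: the paper simply cites it (Drewnowski--Pa\'{u}l, and Filip\'{o}w--Tryba), whereas you prove it from scratch. Both halves of your argument are sound. In the direction $\limsup_n\max_k a_{n,k}>0\Rightarrow\mathcal{I}_A$ not tall, the refinement to strictly increasing column indices is exactly what the column condition $\lim_n a_{n,k}=0$ of regularity provides, and the resulting set $E=\{k_m\}$ does witness non-tallness since every infinite $S\subseteq E$ satisfies $\limsup_n\sum_{k\in S}a_{n,k}\ge\alpha/2$. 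In the converse direction, your three inductive conditions are simultaneously realizable precisely in the alternating order you implicitly use: $N_m$ is chosen after $k_0,\dots,k_{m-1}$ (using $\max_k a_{n,k}\to 0$ and $a_{n,k_j}\to 0$ as $n\to\infty$ for the finitely many fixed columns), and $k_m\in E$ is chosen after $N_m$ (using that each row is summable, hence $a_{n,k}\to 0$ as $k\to\infty$, for the finitely many rows $n\le N_m$); the split of $\sum_j a_{n,k_j}$ for $n\in[N_M,N_{M+1})$ then gives the bound $3\cdot 2^{-M}$ as claimed, so $S=\{k_m\}\in\mathcal{I}_A$. What your route buys is a self-contained proof of the tallness criterion for matrix ideals; what the paper's route buys is brevity, delegating that criterion to the literature.
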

Corollary \ref{cor:characterizationregularmatrices} may be considered as an improvement of Kline’s result \cite{MR1347075} and Demirci and Orhan’s result \cite{MR1758671} in the sense that it provides necessary and sufficient conditions, though the papers \cite{MR1758671, MR1347075} give just sufficient conditions for the space $c(\mathcal{I}_A)$ not to have a locally convex FK topology.

Similarly, we extend Theorem \ref{thm:orhan} for the class of generalized density ideals $\mathcal{I}_\varphi$ in the sense of Farah with an explicit condition:
\begin{cor}\label{cor:orhanplus}
Let $\mathcal{I}_\varphi$ be a generalized density ideal as in \eqref{eq:densityidealdef}. Then $c(\mathcal{I}_\varphi)$ admits a locally convex FK topology if and only if 
$\limsup_n \max_k \varphi_n(\{k\})>0$. 
\end{cor}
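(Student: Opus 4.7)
The plan is to reduce the statement to Theorem \ref{thm:characterizationtallfktopology}. First I observe that the hypotheses of that theorem are met: clearly $c_{00}(\mathcal{I}_\varphi) \subseteq c(\mathcal{I}_\varphi)$, and if $x \in c(\mathcal{I}_\varphi)$ has $\mathcal{I}_\varphi$-limit $\eta$, then $\{n : |x(n)| > |\eta|+1\} \subseteq \{n : |x(n)-\eta|>1\} \in \mathcal{I}_\varphi$, so $x \in \ell_\infty(\mathcal{I}_\varphi)$. Hence the corollary reduces to showing that
\[
\mathcal{I}_\varphi \text{ is not tall} \quad \Longleftrightarrow \quad \limsup_n \max_k \varphi_n(\{k\}) > 0.
\]

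Write $F_n$ for the (finite, pairwise disjoint) support of $\varphi_n$, and let $F := \bigcup_n F_n$. For the direction ($\Leftarrow$), suppose there exist $c>0$ and an increasing sequence $(n_i)$ with $\max_k \varphi_{n_i}(\{k\}) \geq c$. Pick a maximizer $k_i \in F_{n_i}$. Since the $F_n$ are pairwise disjoint the $k_i$ are pairwise distinct, so $A := \{k_i : i \in \omega\}$ is infinite. For any infinite $B \subseteq A$ one has $k_i \in B$ for infinitely many $i$, and then $\varphi_{n_i}(B) \geq \varphi_{n_i}(\{k_i\}) \geq c$, so $B \notin \mathcal{I}_\varphi$; hence $\mathcal{I}_\varphi$ is not tall.

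For the direction ($\Rightarrow$), assume $\lim_n \max_k \varphi_n(\{k\}) = 0$ and fix an infinite $A \subseteq \omega$. If $A \setminus F$ is infinite, any infinite $B \subseteq A \setminus F$ satisfies $\varphi_n(B) = 0$ for all $n$ by the support hypothesis, so $B \in \mathcal{I}_\varphi$. Otherwise $A \cap F$ is infinite but only $A \setminus F$ is small, so $A$ meets infinitely many $F_n$ (as each $F_n$ is finite); pick $a_i \in A \cap F_{n_i}$ for an increasing sequence $(n_i)$ and set $B := \{a_i\}$. Disjointness of the $F_n$ gives $\varphi_m(B) \leq \max_k \varphi_m(\{k\})$ for every $m$, which tends to $0$; hence $B \in \mathcal{I}_\varphi$. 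Either way $A$ contains an infinite subset in $\mathcal{I}_\varphi$, proving tallness.

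The argument is genuinely combinatorial and the disjointness of supports $(F_n)$ does all the heavy lifting—it lets the single-point maximizers play the role of a witness in ($\Leftarrow$) and makes the $\varphi_m$-mass of a transversal set bounded by $\max_k \varphi_m(\{k\})$ in ($\Rightarrow$). I do not expect a serious obstacle here; the only subtlety is remembering that an infinite $A \subseteq F$ must intersect infinitely many distinct $F_n$ because each $F_n$ is finite, which is needed to exclude degenerate configurations in the forward direction.
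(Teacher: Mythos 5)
Your proposal is correct and follows essentially the same route as the paper: reduce via Theorem \ref{thm:characterizationtallfktopology} (i.e.\ Corollary \ref{cor:cI}) to the observation that $\mathcal{I}_\varphi$ is tall if and only if $\lim_n\max_k\varphi_n(\{k\})=0$. The only difference is that the paper merely asserts this tallness criterion, while you supply the (correct) combinatorial verification using the disjoint finite supports.
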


An analogous characterization holds for summable ideals, that is, ideals on $\omega$ of the type
$$
\mathcal{I}_f:=\{S\subseteq \omega: \sum\nolimits_{n \in S}f(n)<\infty\},
$$
where $f: \omega \to [0,\infty)$ is a function such that $\sum_n f(n)=\infty$:
\begin{cor}\label{cor:summable}
Let $\mathcal{I}_f$ be a summable ideal on $\omega$. Then $c(\mathcal{I}_f)$ admits a locally convex FK topology if and only if $\limsup_n f(n)>0$.
\end{cor}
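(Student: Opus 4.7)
The plan is to reduce Corollary~\ref{cor:summable} directly to Corollary~\ref{cor:cI}, which already asserts that $c(\mathcal{I})$ admits a locally convex FK topology if and only if $\mathcal{I}$ is not tall. Hence it suffices to establish the purely combinatorial equivalence
\[
\mathcal{I}_f \text{ is not tall} \iff \limsup_{n\to\infty} f(n) > 0.
\]

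For the ``$\Leftarrow$'' direction, I would fix $\varepsilon > 0$ and an infinite set $S \subseteq \omega$ with $f(n) \ge \varepsilon$ for every $n \in S$, which exists by definition of the upper limit. Then for any infinite $T \subseteq S$ one has $\sum_{n \in T} f(n) \ge \varepsilon |T| = \infty$, so $T \notin \mathcal{I}_f$. Thus $S$ is an infinite set no infinite subset of which lies in $\mathcal{I}_f$, witnessing that $\mathcal{I}_f$ is not tall.

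For the ``$\Rightarrow$'' direction (by contraposition), assume $\limsup_n f(n) = 0$. Since $f \ge 0$, this forces $\lim_n f(n) = 0$. Given any infinite $A \subseteq \omega$, I would recursively extract a strictly increasing sequence $(a_k) \in A^\omega$ such that $f(a_k) \le 2^{-k}$ for every $k$; this is possible because for each $k$ the set $\{n \in \omega : f(n) > 2^{-k}\}$ is finite, so all but finitely many elements of $A$ satisfy $f(n) \le 2^{-k}$. Then $B := \{a_k : k \in \omega\}$ is an infinite subset of $A$ with $\sum_{n \in B} f(n) \le \sum_k 2^{-k} < \infty$, so $B \in \mathcal{I}_f$. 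Hence $\mathcal{I}_f$ is tall.

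There is no real obstacle here: the argument is essentially a direct unfolding of the definition of a summable ideal, once one has the tallness characterization of Theorem~\ref{thm:characterizationtallfktopology} (via Corollary~\ref{cor:cI}) in hand. The only mild point worth checking is the inclusion $c_{00}(\mathcal{I}_f) \subseteq c(\mathcal{I}_f) \subseteq \ell_\infty(\mathcal{I}_f)$ needed to invoke Corollary~\ref{cor:cI}, but this is immediate since every $\mathcal{I}$-convergent sequence is $\mathcal{I}$-bounded and contains the constants.
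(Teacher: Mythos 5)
Your proposal is correct and follows essentially the same route as the paper: the paper's proof simply notes that $\mathcal{I}_f$ is tall if and only if $\lim_n f(n)=0$ (left as ``easy to check'') and then invokes Corollary~\ref{cor:cI}. You have merely written out the tallness equivalence in detail, and both directions of your argument are sound.
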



Corollary \ref{cor:cI} studies whether the sequence space $c(\mathcal{I})=\left\{x \in \mathbf{R}^\omega: Ix \in c(\mathcal{I})\right\}$ is a locally convex FK space, where $I$ is the infinite identity matrix. 
On the same lines of \cite{Leo2020}, given an infinite matrix $A=(a_{n,k})$, one could ask whether the same results hold for the space
$$
c_A(\mathcal{I}):=\left\{x \in d_A: Ax \in c(\mathcal{I})\right\},
$$
where 
$$
d_A:=\left\{x \in \mathbf{R}^\omega: Ax \text{ is well defined}\right\}.
$$
Here, $Ax$ stands for the sequence whose $n$-th term is the series $\sum_ka_{n,k}x_k$, provided that it converges. 
Note that $c_A:=c_A(\mathrm{Fin})$ is usually called \emph{convergence domain} of $A$, see e.g. \cite[p. 3]{MR738632}.

Based on several properties of FK-spaces and matrix maps, we obtain the following extension of Corollary \ref{cor:cI}:
\begin{prop}\label{prop:extension}
Let $A$ be an infinite matrix and $\mathcal{I}$ be an ideal on $\omega$. Then:
\begin{enumerate}[label={\rm (\roman{*})}]
\item \label{item:partpositive} $c_A(\mathcal{I})$ admits a locally convex FK topology, provided that $\mathcal{I}$ is not tall\textup{;} 
\item \label{item:partnegative} $c_A(\mathcal{I})$ does not admit a locally convex FK topology, provided that there exists a tall ideal $\mathcal{J}$ on $\omega$ such that $c_{00}(\mathcal{J})\subseteq c_A(\mathcal{I})\subsetneq \mathbf{R}^\omega$\textup{.}
\end{enumerate}
\end{prop}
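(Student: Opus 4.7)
The plan is to reduce both assertions to results already proved in the paper, together with one standard piece of FK-space theory concerning matrix domains.

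For part \ref{item:partpositive}, since $\mathcal{I}$ is not tall, Corollary \ref{cor:cI} yields that $c(\mathcal{I})$ admits a locally convex FK topology. I would then invoke the classical fact (see e.g. \cite[Chapter 4]{MR738632}) that, for any locally convex FK space $Y$ and any infinite matrix $A$, the matrix domain
$$
Y_A := \{x \in \mathbf{R}^\omega : Ax \text{ is well defined and belongs to } Y\}
$$
is again a locally convex FK space, with topology generated by the coordinate seminorms on $\mathbf{R}^\omega$, the partial-sum seminorms guaranteeing that each row of $Ax$ converges, and the pullback under $A$ of a generating family of seminorms of $Y$. Applied to $Y = c(\mathcal{I})$, this produces precisely a locally convex FK topology on $c_A(\mathcal{I})$.

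For part \ref{item:partnegative}, the hypotheses say exactly that $c_A(\mathcal{I})$ is a proper vector subspace of $\mathbf{R}^\omega$ containing $c_{00}(\mathcal{J})$ for some tall ideal $\mathcal{J}$. Hence Theorem \ref{thm:newnotllocallyconvex}, applied to $V = c_A(\mathcal{I})$ with the tall ideal $\mathcal{J}$ in place of $\mathcal{I}$ (note that the witnessing tall ideal need not coincide with the ideal defining $c_A(\mathcal{I})$), directly rules out any locally convex FK topology on $c_A(\mathcal{I})$.

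The only non-routine ingredient is the invoked classical result that the matrix domain of a locally convex FK space is itself a locally convex FK space; verifying this requires checking metrizability, completeness, and continuity of the coordinate projections in the seminorm topology described above, but this is standard FK machinery and no new combinatorial or topological input beyond Theorems \ref{thm:newnotllocallyconvex} and \ref{thm:positiveside} is required.
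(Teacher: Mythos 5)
Your proposal is correct and follows essentially the same route as the paper: part \ref{item:partnegative} is exactly the paper's one-line application of Theorem \ref{thm:newnotllocallyconvex} with the tall ideal $\mathcal{J}$, and for part \ref{item:partpositive} the classical matrix-domain fact you invoke is precisely what the paper assembles from its ingredients, namely that $d_A$ is a locally convex FK space, that $x\mapsto Ax$ is automatically continuous, and that preimages under continuous maps of locally convex FK spaces are locally convex FK spaces (Lemma \ref{lem:automaticcontinuity}), applied to $Y=c(\mathcal{I})$ via Corollary \ref{cor:cI}.
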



\subsection{Further results} 
Quite interestingly, we have the following non-inclusion:
\begin{cor}\label{cor:noninclusion}
Let $\mathcal{I}$ a tall ideal on $\omega$ and $\mathcal{J}$ be a nontall ideal on $\omega$. Then 
%
$$
c_{00}(\mathcal{I})\setminus \ell_\infty(\mathcal{J})\neq \emptyset,
$$
namely, there exists a real sequence supported on $\mathcal{I}$ which is not $\mathcal{J}$-bounded. 
\end{cor}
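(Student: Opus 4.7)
My approach is to play off the two hypotheses against each other in a single construction: the non-tallness of $\mathcal{J}$ supplies a ``safe'' infinite set $S\subseteq\omega$ every infinite subset of which lies outside $\mathcal{J}$, while the tallness of $\mathcal{I}$ then forces $S$ to contain an infinite subset $T\in\mathcal{I}$. This $T$ will serve as the support of the witnessing sequence.

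First I will unpack the definitions. Negating tallness of $\mathcal{J}$ produces an infinite $S\subseteq\omega$ with $\mathcal{P}(S)\cap\mathcal{J}\subseteq\mathrm{Fin}$. Applying tallness of $\mathcal{I}$ to this $S$ then yields an infinite $T\subseteq S$ with $T\in\mathcal{I}$. Enumerating $T=\{t_0<t_1<\cdots\}$ in increasing order, I will define $x\in\mathbf{R}^\omega$ by $x(t_k):=k+1$ for each $k\in\omega$ and $x(n):=0$ for $n\in\omega\setminus T$.

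It then remains to verify the two memberships. That $x\in c_{00}(\mathcal{I})$ is immediate, since $\mathrm{supp}\,x=T\in\mathcal{I}$. For $x\notin\ell_\infty(\mathcal{J})$, the point is that for every threshold $M\in\mathbf{R}$ the level set $\{n\in\omega:|x(n)|>M\}$ is a cofinite subset of $T$, hence an infinite subset of $S$, and therefore not in $\mathcal{J}$ by the choice of $S$.

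I do not anticipate a serious obstacle: the argument is essentially a matter of unpacking the definitions in the right order. The one subtlety worth flagging is that $T$ must be chosen \emph{inside} the non-tallness witness $S$ of $\mathcal{J}$, not merely anywhere in $\mathcal{I}$, because it is precisely the inclusion $T\subseteq S$ that transfers the $\mathcal{J}$-avoidance property from $S$ to every cofinite truncation of $T$ and hence to all level sets of $x$ simultaneously.
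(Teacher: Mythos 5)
Your proof is correct, but it takes a genuinely different route from the paper. The paper deduces the corollary indirectly from its two main theorems: assuming $c_{00}(\mathcal{I})\subseteq\ell_\infty(\mathcal{J})$, the space $V:=c_{00}(\mathcal{I})$ would admit a locally convex FK topology by Theorem \ref{thm:positiveside} (since $\mathcal{J}$ is not tall and $V$ is dense in $\mathbf{R}^\omega$), while Theorem \ref{thm:newnotllocallyconvex} (since $\mathcal{I}$ is tall and $V$ is a proper subspace) forbids this --- a contradiction. You instead give a direct combinatorial construction: take the non-tallness witness $S$ for $\mathcal{J}$, use tallness of $\mathcal{I}$ to extract an infinite $T\subseteq S$ with $T\in\mathcal{I}$, and put an unbounded sequence on $T$, so that every level set $\{n:|x(n)|>M\}$ with $M\ge 0$ is a cofinite (hence infinite) subset of $T\subseteq S$ and thus lies outside $\mathcal{J}$ (for $M<0$ the level set is $\omega\notin\mathcal{J}$, which is trivial). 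Your argument is self-contained, elementary, and produces an explicit witness, which shows the non-inclusion is really a purely combinatorial fact about tall versus nontall ideals; the paper's argument buys brevity and displays the corollary as a structural consequence of its functional-analytic machinery, but it is less informative about where the witnessing sequence comes from. Both proofs are valid, and your observation that $T$ must be extracted inside $S$ is exactly the point that makes the direct approach work.
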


It has been conjectured by DeVos \cite{MR501124} that, if $V$ is a locally convex FK space, then either $\ell_\infty\subseteq V$ or $V \cap \{0,1\}^\omega$ is a meager set in $\{0,1\}^\omega$. The case where $\{0,1\}^\omega\subseteq V$ has been proved by Bennett and Kalton in \cite{MR310597}. The case where $V=c_A:=\{x \in \mathbf{R}^\omega: Ax \in c\}$, for some summability matrix $A$ such that $c_{00}\subseteq c_A$, has been shown in \cite{MR13435, MR425410}. For related results, see also  \cite{Leo2020, 
MR620041}. Here, we provide some additional positive examples (notice that $c(\mathcal{I})$ is nonseparable whenever $\mathcal{I}\neq \mathrm{Fin}$ by \cite[Lemma 2.1]{MR3836186}):
\begin{cor}\label{cor:devos}
Let $\mathcal{I}$ be a nontall ideal on $\omega$. Then $c(\mathcal{I})$ is a locally convex FK space and $c(\mathcal{I}) \cap \{0,1\}^\omega$ is meager in $\{0,1\}^\omega$.
\end{cor}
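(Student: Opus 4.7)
The first assertion that $c(\mathcal{I})$ is a locally convex FK space is immediate from Corollary \ref{cor:cI}. For the meagerness statement, my plan is to reduce to Corollary \ref{cor:maximalideal} via a direct identification of $c(\mathcal{I}) \cap \{0,1\}^\omega$ with a combinatorial object attached to $\mathcal{I}$.

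Identifying $\{0,1\}^\omega$ with $\mathcal{P}(\omega)$ via characteristic functions, I would first establish the equality
$$
c(\mathcal{I}) \cap \{0,1\}^\omega = \mathcal{I} \cup \mathcal{I}^\star.
$$
The inclusion $\supseteq$ is immediate since the indicator of $A \in \mathcal{I}$ is $\mathcal{I}$-convergent to $0$, and that of $A \in \mathcal{I}^\star$ to $1$. For the reverse, a $\{0,1\}$-valued sequence with an $\mathcal{I}$-limit must have that limit in $\{0,1\}$: if the limit $\eta$ were outside $\{0,1\}$, then taking $\varepsilon := \tfrac{1}{2}\min\{|\eta|,|\eta-1|\}$ would force $\omega \in \mathcal{I}$, contradicting properness of $\mathcal{I}$. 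Reading off the support then places the sequence in $\mathcal{I}$ (if $\eta=0$) or in $\mathcal{I}^\star$ (if $\eta=1$).

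Once this identification is in hand, the conclusion follows in two steps. Since $c(\mathcal{I})$ admits a locally convex FK topology, the contrapositive of Corollary \ref{cor:maximalideal} forces $\mathcal{I}$ to be meager in the Cantor space. Moreover, the involution $S \mapsto \omega \setminus S$ is a self-homeomorphism of $\{0,1\}^\omega$ sending $\mathcal{I}$ onto $\mathcal{I}^\star$, so $\mathcal{I}^\star$ is meager as well. A union of two meager sets being meager, we obtain that $c(\mathcal{I}) \cap \{0,1\}^\omega$ is meager in $\{0,1\}^\omega$.

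There is essentially no real obstacle: the argument is a short bookkeeping exercise combining Corollary \ref{cor:cI} (for the FK topology), Corollary \ref{cor:maximalideal} (for meagerness of $\mathcal{I}$), and the self-homeomorphism given by complementation. The only point requiring care is the elementary observation that properness of $\mathcal{I}$ constrains the possible $\mathcal{I}$-limits of a $\{0,1\}$-valued sequence to the set $\{0,1\}$, which is what yields the identification with $\mathcal{I} \cup \mathcal{I}^\star$.
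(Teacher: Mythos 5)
Your proof is correct and follows essentially the same route as the paper: the FK part from Corollary \ref{cor:cI}, the identification $c(\mathcal{I})\cap\{0,1\}^\omega=\{\bm{1}_A: A\in\mathcal{I}\cup\mathcal{I}^\star\}$, and meagerness of $\mathcal{I}$ (hence of $\mathcal{I}^\star$ by complementation) via Corollary \ref{cor:maximalideal}. The extra details you supply (limits of $\{0,1\}$-valued sequences lie in $\{0,1\}$, and the complementation homeomorphism) are exactly what the paper leaves implicit.
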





To conclude, we show that spaces $c_0(\mathcal{I})$ do not have stronger properties than being FK spaces unless $\mathcal{I}=\mathrm{Fin}$.

For instance, recall that 
a locally convex FK space $V$ is said to be an \emph{AK space} if $c_{00}\subseteq V$ and 
$(e_n)$ is a Schauder basis for $V$, where $e_n(n)=1$ and $e_n(k)=0$ for all distinct $n,k \in \omega$, see \cite[Definition 4.2.13]{MR738632}.
\begin{prop}\label{prop:AK}
Let $\mathcal{I}$ be an ideal on $\omega$. Then $c_0(\mathcal{I})$ is an AK space if and only if $\mathcal{I}=\mathrm{Fin}$.
\end{prop}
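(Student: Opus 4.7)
The direction $(\Leftarrow)$ is immediate: for $\mathcal{I}=\mathrm{Fin}$, the space $c_0(\mathcal{I})$ is the classical Banach space $c_0$ with the sup-norm, and $(e_n)$ is its standard Schauder basis.

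For $(\Rightarrow)$, I would argue by contradiction. Assume $c_0(\mathcal{I})$ is AK; in particular it carries a locally convex FK topology, so the chain $c_{00}(\mathcal{I})\subseteq c_0(\mathcal{I})\subseteq \ell_\infty(\mathcal{I})$ together with Theorem~\ref{thm:characterizationtallfktopology} forces $\mathcal{I}$ to be not tall. Suppose further that $\mathcal{I}\supsetneq \mathrm{Fin}$, fix an infinite $B\in\mathcal{I}$, and note that $\mathbf{1}_B\in c_{00}(\mathcal{I})\cap\ell_\infty$ while $\mathbf{1}_B\notin c_0$. Writing $\tau$ for the (unique) locally convex FK topology on $c_0(\mathcal{I})$, the AK hypothesis says exactly that $S_n(\mathbf{1}_B)=\mathbf{1}_{B\cap[0,n]}\to\mathbf{1}_B$ in $\tau$.

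The plan is to propagate this $\tau$-convergence to sup-norm convergence on the closed Banach subspace $W:=c_0(\mathcal{I})\cap\ell_\infty$, and then observe that $\|\mathbf{1}_B-S_n(\mathbf{1}_B)\|_\infty=1$ for every $n$, a contradiction. Concretely, the inclusion $(W,\|\cdot\|_\infty)\hookrightarrow(c_0(\mathcal{I}),\tau)$ has closed graph (both topologies refine the pointwise topology on $\mathbf{R}^\omega$), hence is continuous between Fréchet spaces; with this in hand, one invokes the uniqueness of the locally convex FK topology on $W$ to conclude that $\tau\!\upharpoonright\! W$ coincides with $\|\cdot\|_\infty$, which delivers the required sup-norm convergence.

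The main obstacle will be justifying the coincidence $\tau\!\upharpoonright\! W=\|\cdot\|_\infty$, because the standard closed-graph argument for uniqueness requires $(W,\tau\!\upharpoonright\! W)$ itself to be Fréchet, and thus $W$ to be $\tau$-closed in $c_0(\mathcal{I})$. I expect to settle this by combining the Banach--Steinhaus theorem applied to the families of coordinate projections $\{e_k^\ast:k\in A\}$, where $A$ ranges over the nontall-witnessing subsets of $\omega$ (each such family is pointwise bounded on $c_0(\mathcal{I})$ and therefore equicontinuous), together with the concrete description of $\tau$ obtained in the proof of Theorem~\ref{thm:positiveside}; these ingredients should show that $\mathcal{I}$-boundedness is preserved under $\tau$-limits, so $W$ is indeed $\tau$-closed, completing the argument.
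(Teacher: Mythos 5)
Your $(\Leftarrow)$ direction is fine, but the $(\Rightarrow)$ direction has a genuine gap, and it sits exactly at the step you flagged: the identification $\tau\!\upharpoonright\! W=\|\cdot\|_\infty$ on $W:=c_0(\mathcal{I})\cap\ell_\infty$ cannot be obtained along the lines you propose. The closed-graph argument you do have only shows that the inclusion $(W,\|\cdot\|_\infty)\hookrightarrow(c_0(\mathcal{I}),\tau)$ is continuous, i.e.\ that $\tau\!\upharpoonright\! W$ is \emph{coarser} than the sup-norm topology, which is the useless direction for your contradiction. To reverse it via uniqueness of FK topologies (or the open mapping theorem) you would need $(W,\tau\!\upharpoonright\! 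W)$ to be complete, hence $W$ to be $\tau$-closed in $c_0(\mathcal{I})$; but inside your own proof by contradiction this is provably false. Since $\mathcal{I}\neq\mathrm{Fin}$, pick an infinite $B\in\mathcal{I}$ and let $x$ be zero off $B$ and tend to infinity along $B$; then $x\in c_0(\mathcal{I})\setminus\ell_\infty$, while the AK hypothesis makes its sections $S_n(x)\in c_{00}\subseteq W$ $\tau$-converge to $x\notin W$. So under the very assumption you are refuting, $W$ is a proper $\tau$-dense subspace and never $\tau$-closed. The Banach--Steinhaus sketch cannot repair this: showing that ``$\mathcal{I}$-boundedness is preserved under $\tau$-limits'' is vacuous (every element of $c_0(\mathcal{I})$ is $\mathcal{I}$-bounded), whereas what your plan needs is preservation of genuine sup-boundedness, which is exactly what the sections above violate. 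Indeed, for concrete nontall ideals with $\mathcal{I}\neq\mathrm{Fin}$ the FK topology of $c_0(\mathcal{I})$ restricted to the bounded part can be strictly coarser than the sup-norm, so the equality you aim at is not merely unproved but unavailable.

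The paper's proof takes a completely different and much shorter route that avoids identifying $\tau$ with any norm: an AK space is automatically separable (finite rational linear combinations of $(e_n)$ are dense, since sections converge and scalar operations are continuous), while $c_0(\mathcal{I})$ is nonseparable whenever $\mathcal{I}\neq\mathrm{Fin}$ by \cite[Lemma 2.1]{MR3836186}; these two facts are incompatible. If you want to keep a witness-style argument in the spirit of $\mathbf{1}_B$, you should base it on an invariant that makes sense for the unknown Fr\'{e}chet topology $\tau$ itself (such as separability, as the paper does), rather than trying to force $\tau$ to coincide with the sup-norm on $c_0(\mathcal{I})\cap\ell_\infty$.
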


On a similar note, recall that a locally convex FK space $V$ is said to be a \emph{BK space} if its metric is induced by a norm, or equivalently, if $V$ is also a Banach space, see \cite[p. 55]{MR738632} or \cite{MR1034884}.
\begin{prop}\label{prop:BK}
Let $\mathcal{I}$ be an ideal on $\omega$. Then $c_0(\mathcal{I})$ is included in some BK space if and only if $\mathcal{I}=\mathrm{Fin}$.
\end{prop}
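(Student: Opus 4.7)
\medskip

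\noindent\emph{Proof plan.} The forward direction is immediate: if $\mathcal{I} = \mathrm{Fin}$, then $c_0(\mathcal{I})$ is literally the classical space $c_0$, which is a BK space under the supremum norm and contains itself. For the converse, I intend to argue by contradiction, reducing everything to the classical fact that the countable product $\mathbf{R}^\omega$ is not normable (because every $0$-neighborhood in the product topology contains an infinite-dimensional linear subspace, so no $0$-neighborhood is bounded).

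Assuming $\mathcal{I} \neq \mathrm{Fin}$, the plan is to pick an infinite set $A = \{a_0 < a_1 < \cdots\} \in \mathcal{I}$ and suppose, toward a contradiction, that $c_0(\mathcal{I}) \subseteq W$ for some BK space $W$ with norm $\|\cdot\|$. I then consider the closed subspace
$$
W_A := \{x \in W : x(n) = 0 \text{ for all } n \in \omega \setminus A\},
$$
which is the intersection of the kernels of the (continuous) coordinate evaluations $\pi_n$ for $n \notin A$, hence a Banach space in its own right.

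The key observation is that \emph{every} sequence $x \in \mathbf{R}^\omega$ with $\mathrm{supp}\, x \subseteq A$ lies in $c_0(\mathcal{I})$, since $\{n : |x(n)| > \varepsilon\} \subseteq A \in \mathcal{I}$ for all $\varepsilon > 0$. Consequently $W_A$ equals the \emph{entire} vector space of sequences supported on $A$. The natural evaluation map
$$
\phi : W_A \longrightarrow \mathbf{R}^A, \qquad x \mapsto (x(a))_{a \in A},
$$
(with $\mathbf{R}^A$ carrying the product topology) is then a linear bijection, and it is continuous because each coordinate evaluation restricted to $W_A$ is continuous. Since both $W_A$ and $\mathbf{R}^A$ are Fr\'echet spaces, the open mapping theorem upgrades $\phi$ to a topological isomorphism. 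But then the product topology on $\mathbf{R}^A \cong \mathbf{R}^\omega$ would coincide with a Banach-space topology, contradicting non-normability of $\mathbf{R}^\omega$.

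The main subtlety to verify carefully is not so much any single step but the cumulative bookkeeping: that $W_A$ really is closed (and thus a genuine Banach space under the inherited norm, so that the open mapping theorem applies), and that every sequence supported on $A$ lies in $W_A$ (which is where the hypothesis $c_0(\mathcal{I}) \subseteq W$ is used). Once those are in place, invoking non-normability of $\mathbf{R}^\omega$ is routine.
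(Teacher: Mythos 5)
Your argument is correct, but it takes a genuinely different route from the paper's. Both proofs hinge on the same key observation (which the paper, in the remark after the statement, credits to \cite{MR1734462}): if $A\in\mathcal{I}$ is infinite, then \emph{every} real sequence supported on $A$ lies in $c_0(\mathcal{I})$, so $c_0(\mathcal{I})$ contains a linear copy of the full product $\mathbf{R}^A\cong\mathbf{R}^\omega$. From there the paper is quicker but less self-contained: it invokes Wilansky's characterization \cite[Theorem 4.2.11]{MR738632} that a sequence space $V$ is contained in some BK space if and only if there exists $y\in\mathbf{R}^\omega$ with $x(n)=O(y(n))$ for every $x\in V$, and then rules out any candidate $y$ by the sequence $x(n)=n(|y(n)|+1)$ for $n\in S$ (and $0$ otherwise), which lies in $c_0(\mathcal{I})$ but is not $O(y(n))$. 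You avoid citing that theorem and argue structurally instead: $W_A=\{x\in W: x(n)=0 \text{ for all } n\notin A\}$ is an intersection of kernels of continuous coordinate functionals, hence a closed, thus Banach, subspace of the BK space $W$; by the key observation it coincides as a vector space with all sequences supported on $A$; the coordinate map onto $\mathbf{R}^A$ is a continuous linear bijection between F-spaces, so the open mapping theorem upgrades it to a topological isomorphism, which would make the product topology on $\mathbf{R}^A\cong\mathbf{R}^\omega$ normable, a contradiction with Kolmogorov's criterion. Your route costs the open mapping theorem (essentially the uniqueness of FK topologies on a given sequence space) but needs only standard functional analysis; the paper's route is a two-line reduction once Wilansky's theorem is granted. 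In the forward direction the two are equally trivial: the paper exhibits the majorant $y=(1,1,\ldots)$, while you note that $c_0(\mathrm{Fin})=c_0$ is itself a BK space.
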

It is worth to remark that the key feature inside the proof of Proposition \ref{prop:BK} has been incidentally noted in \cite[Section 1]{MR1734462}.

\section{Proofs}\label{sec:proofs}

On the same lines of \cite{MR954458, Connor22, MR1347075}, we will need the following characterization:
\begin{thm}\label{thm:bennetkalton}
Let $V$ be a dense proper subspace of $\mathbf{R}^\omega$. Then $V$ admits a locally convex FK topology if and only if there exist $B\subseteq c_{00}$ and $y \in \mathbf{R}^\omega$ such that 
$$
\sup\nolimits_{x \in B}|x\cdot y|=\infty 
\quad \text{ and }\quad 
\sup\nolimits_{x \in B}|x\cdot v|<\infty \text{ for all }v \in V.
$$
where $x\cdot y:=\sum_n x(n)y(n)$.
\end{thm}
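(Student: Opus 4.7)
The characterization is of Bennett--Kalton type; both implications rely on duality and on the uniform boundedness principle applied in the Fr\'echet (hence barrelled) setting.

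For the necessity $(\Rightarrow)$, the plan is to first fix some $y \in \mathbf{R}^\omega \setminus V$, which exists by properness. A key preliminary is that the FK topology $\tau$ on $V$ must be strictly finer than the subspace topology $\tau_0$ inherited from $\mathbf{R}^\omega$: otherwise, $V$ would be a dense, completely metrizable vector subspace of $\mathbf{R}^\omega$, and a Pettis-theorem or Baire-category argument would force $V = \mathbf{R}^\omega$. Hence there will be a $\tau$-continuous seminorm $p$ that is not $\tau_0$-continuous. Since every $x \in c_{00}$ induces a $\tau$-continuous linear functional $v \mapsto x \cdot v$ (by the FK property) and $V$ is barrelled, Banach--Steinhaus identifies the $\tau$-equicontinuous subsets of $c_{00}$ with the $V$-pointwise bounded ones. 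Unfolding the polar of $p$ yields the desired $B \subseteq c_{00}$, pointwise bounded on $V$; the failure of $\tau_0$-continuity of $p$ then translates into $B$ not being equicontinuous for the $\mathbf{R}^\omega$-topology, i.e., there is $y \in \mathbf{R}^\omega$ with $\sup_{x \in B}|x \cdot y|=\infty$.

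For the sufficiency $(\Leftarrow)$, given $B$ and $y$, the plan is to construct a locally convex FK topology on $V$ using the coordinate seminorms $p_k(v):=|v(k)|$ together with seminorms of the form $p_C(v):=\sup_{x \in C}|x \cdot v|$, where $C$ ranges over a suitable (ideally countable cofinal) family of subsets of $c_{00}$ that are pointwise bounded on $V$. The given $B$ guarantees that this family is nontrivial. Local convexity and continuity of coordinate functionals are immediate by construction, and lower semicontinuity of each $p_C$ with respect to coordinate convergence in $\mathbf{R}^\omega$ will let one pass to limits of Cauchy sequences.

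The hard part is showing that this topology is metrizable and complete on $V$ itself. Metrizability requires extracting a countable cofinal family of seminorms, which I expect to come from the second-countability of the coordinate-wise topology. Completeness is the main obstacle: a Cauchy sequence $(v_n)\subseteq V$ converges coordinate-wise to some $v \in \mathbf{R}^\omega$, and one needs $v \in V$. If $v \notin V$, a Hahn--Banach style separation should produce $C \subseteq c_{00}$ pointwise bounded on $V$ but unbounded on $v$, yielding $p_C(v-v_n)\to \infty$ and contradicting Cauchyness; the existence of at least one such witness, namely the given $B$ for $y$, is the seed for running this separation for every bad $v$. A related subtlety in $(\Rightarrow)$ is that the topological dual of $(V,\tau)$ may strictly contain $c_{00}$, so care is needed to arrange the distinguishing equicontinuous set inside $c_{00}$ rather than in the full dual.
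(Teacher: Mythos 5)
The paper does not prove this statement from scratch: it is imported wholesale from Wilansky (Theorems 15.1.1 and 15.2.7) and Bennett--Kalton (Proposition 1). So any self-contained argument is automatically a different route; the problem is that your plan, in both directions, leaves precisely the hard steps open, and one of them cannot be closed in the form you propose.

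For necessity, the step ``unfolding the polar of $p$ yields the desired $B\subseteq c_{00}$'' does not work as stated. The polar of $\{p\le 1\}$ lives in the dual of $(V,\tau)$, which is in general much larger than $c_{00}$, and its trace on $c_{00}$ may carry no information: if $p(v)=|f(v)|$ for a single $\tau$-continuous functional $f$ not represented by a finitely supported sequence, then $p$ is $\tau$-continuous and not $\tau_0$-continuous, yet the polar consists of the multiples of $f$ and meets $c_{00}$ only in $\{0\}$. Moreover Banach--Steinhaus in the barrelled space $(V,\tau)$ points the wrong way: it says pointwise bounded families are $\tau$-equicontinuous, whereas you must \emph{produce} a family in $c_{00}$ that is pointwise bounded on $V$ but unbounded at some $y\in\mathbf{R}^\omega$. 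The correct mechanism is to show that $(V,\tau_0)$, with $\tau_0$ the topology inherited from $\mathbf{R}^\omega$, is not barrelled (if it were, a closed graph argument applied to the identity $(V,\tau_0)\to (V,\tau)$ gives $\tau=\tau_0$, hence $V$ complete, dense, and therefore equal to $\mathbf{R}^\omega$), and then to use that the $\tau_0$-continuous functionals on the dense subspace $V$ are exactly the restrictions of elements of $c_{00}$, a subset of $c_{00}$ being $\tau_0$-equicontinuous iff it is pointwise bounded on all of $\mathbf{R}^\omega$. Your closing sentence concedes that getting the witnesses inside $c_{00}$ ``needs care'', but that concession is the entire content of this implication; as written, $B$ is not produced.

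For sufficiency, the obstacle you flag (completeness of your seminorm topology on $V$) is not a technical detail that a Hahn--Banach separation will fix: it is a genuine obstruction. The space $V=c_{00}$ is a dense proper subspace of $\mathbf{R}^\omega$ satisfying the right-hand condition (take $B=\{\sum_{i\le n}e_i:n\in\omega\}$ and $y=(1,1,1,\dots)$), yet $c_{00}$ admits no completely metrizable vector topology whatsoever, since a space of countably infinite Hamel dimension is a countable union of proper closed subspaces and hence not Baire. Consequently no family of seminorms of the kind you describe, nor any other, can turn such a $V$ itself into a locally convex FK space; what non-barrelledness of $(V,\tau_0)$ (equivalently, the existence of $B$ and $y$) yields via the cited Bennett--Kalton/Wilansky machinery is a locally convex FK space properly contained in $\mathbf{R}^\omega$ and \emph{containing} $V$, and the sufficiency half has to be understood and used in that form (or with additional hypotheses on $V$). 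Your plan would therefore fail exactly at the completeness step you identified as the main difficulty.
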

\begin{proof}
It follows by 
\cite[Theorems 15.1.1 and 15.2.7]{MR518316} and \cite[Proposition 1]{MR322474}.
\end{proof}

Also the following property of locally convex FK spaces will be useful:
\begin{lem}\label{lem:automaticcontinuity}
Let $X$ and $Y$ be locally convex FK spaces, with topologies generated by families of seminorms $\mathcal{A}$ and $\mathcal{B}$, respectively. 
Let also $T: X\to \mathbf{R}^\omega$ be a continuous linear map and define 
$$
\hat{X}:=T^{-1}[Y].
$$
Then 
$\hat{X}$ is a locally convex FK space with topology generated by the family of seminorms $\mathcal{A} \cup \{b\circ T: b \in \mathcal{B}\}$.
%
\end{lem}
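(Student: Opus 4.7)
The plan is to verify the four defining properties of a locally convex FK space for $\hat{X}$ equipped with the topology $\tau$ generated by $\mathcal{A}\cup\{b\circ T:b\in\mathcal{B}\}$. The easy parts come first: linearity of $T$, together with $Y$ being a subspace of $\mathbf{R}^\omega$, yields that $\hat{X}=T^{-1}[Y]$ is a vector subspace of $\mathbf{R}^\omega$; each $b\circ T$ is a seminorm (composition of a seminorm with a linear map), so $\tau$ is locally convex; since $X$ and $Y$ are metrizable Fr\'{e}chet spaces, we may replace $\mathcal{A}$ and $\mathcal{B}$ by countable subfamilies generating the same topologies, so $\tau$ is generated by a countable family of seminorms and hence is metrizable; and $\tau$ is at least as fine as the topology $X$ induces on $\hat{X}$ (because $\mathcal{A}$ already generates the latter), so the continuity of $\hat{X}\hookrightarrow\mathbf{R}^\omega$ follows from the continuity of $X\hookrightarrow\mathbf{R}^\omega$.

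The bulk of the argument is completeness. Given a $\tau$-Cauchy sequence $(x_n)$ in $\hat{X}$, the seminorms in $\mathcal{A}$ witness that $(x_n)$ is Cauchy in $X$, hence converges to some $x\in X$; and the seminorms $\{b\circ T:b\in\mathcal{B}\}$ witness that $(Tx_n)$ is Cauchy in $Y$, hence converges to some $y\in Y$. One then needs to identify $Tx$ with $y$, which will place $x$ in $T^{-1}[Y]=\hat{X}$. This is where the hypothesis that $T:X\to\mathbf{R}^\omega$ is continuous enters decisively: $x_n\to x$ in $X$ forces $Tx_n\to Tx$ pointwise, while $Tx_n\to y$ in $Y$ forces $Tx_n\to y$ pointwise since $Y$ is an FK space; uniqueness of limits in the Hausdorff space $\mathbf{R}^\omega$ then gives $Tx=y\in Y$. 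That $x_n\to x$ in $\tau$ is immediate from the two constituent limits: the $\mathcal{A}$-components come from convergence in $X$, and for $b\in\mathcal{B}$ one has $b(Tx_n-Tx)=b(Tx_n-y)\to 0$ by construction of $y$.

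The main obstacle is precisely the identification $Tx=y$: it requires $T$ to be continuous into $\mathbf{R}^\omega$ rather than merely into $Y$, so that the two a priori independent completions, one carried out in $X$ and the other in $Y$, can be reconciled through their common continuous embedding in $\mathbf{R}^\omega$. Once this is in place the remaining verifications are essentially bookkeeping in the definition of the seminorm topology.
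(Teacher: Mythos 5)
Your proof is correct, but note that the paper itself does not argue this lemma at all: it simply cites Wilansky (Lemma 5.5.10 of \emph{Modern methods in topological vector spaces}), so your self-contained verification is a genuinely different (and more informative) route. Your argument is exactly the standard one behind such results: $\hat{X}$ carries the initial topology induced by the inclusion $\hat{X}\hookrightarrow X$ and by $T:\hat{X}\to Y$, and the only nontrivial point is completeness, which you handle correctly by completing separately in $X$ and in $Y$ and reconciling the two limits through the common continuous embedding into the Hausdorff space $\mathbf{R}^\omega$ (using that $Y$ is an FK space so that convergence in $Y$ implies coordinatewise convergence, and that $T$ is continuous into $\mathbf{R}^\omega$); sequential completeness suffices because the topology is metrizable. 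Two small points you gloss over, both harmless: metrizability requires not just countably many seminorms but also that they separate points, which holds because $\mathcal{A}$ already separates points of $X\supseteq\hat{X}$; and when you replace $\mathcal{A}$, $\mathcal{B}$ by countable generating families you should observe that this does not change the topology generated on $\hat{X}$, since that topology is precisely the initial one described above and hence depends only on the topologies of $X$ and $Y$, not on the chosen generating families. With these remarks your argument is a complete proof of the lemma, whereas the paper delegates it to the literature.
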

\begin{proof}
See \cite[Lemma 5.5.10]{MR518316}.
\end{proof}

We are ready for the proofs of our main results. 
\begin{proof}
[Proof of Theorem \ref{thm:newnotllocallyconvex}]
Note that $V$ contains $c_{00}$, hence it is a dense proper vector subspace of $\mathbf{R}^\omega$. 
It follows by Theorem \ref{thm:bennetkalton} 
that $V$ does not admit a locally convex FK topology if and only if
$$
(\forall v \in V, \sup\nolimits_{x \in B}|x\cdot v|<\infty) 
\implies  
(\forall y \in \mathbf{R}^\omega, \sup\nolimits_{x \in B}|x\cdot y|<\infty)
$$
for all $B\subseteq c_{00}$; 
note that this is always well defined since $x \in c_{00}$. 
To this aim, fix a subset $B\subseteq c_{00}$ and suppose that $y \in \mathbf{R}^\omega$ is fixed and $\sup_{x \in B}|x\cdot y|=\infty$. Hence, it is enough to show that there exists $v \in V$ such that $\sup_{x \in B}|x\cdot v|=\infty$. 

For, define 
$$
\forall n \in\omega, \quad 
\kappa(n):=\sup\nolimits_{x \in B}|x(n)y(n)|.
$$

\bigskip
 
\textsc{First case: $\kappa(n)=\infty$ for some $n\in \omega$.} For each $k \in \omega$, let $e_k$ be the $k$-th unit vector of $\ell_\infty$, namely, the sequence $(e_k(n): n \in \omega)$ such that $e_k(k)=1$ and $e_k(n)=0$ otherwise. 
Then $\sup_{x \in B}|x(n)e_{n}(n)|=\infty$. 
Since $e_n \in c_{00}\subseteq V$, we conclude that $\sup_{x \in B}|x\cdot v|=\infty$ for some $v \in V$.

\bigskip

\textsc{Second case: $\kappa(n)<\infty$ for all $n \in \omega$.} In this case, there exists a sequence $(x_n: n \in \omega)$ of elements of $B$ such that $|x_n\cdot y|\ge 2^n$ for all $n \in \omega$. 
Now let us define three increasing sequences $(k_n)$, $(s_n)$, and $(m_n)$ in $\omega$ as it follows: set $m_0:=0$, $s_0:=\max \, \mathrm{supp}\,x_0$ and let $k_0$ be an integer such that $x_{0}(k_0)y(k_0) \neq 0$; then, for each positive integer $n$, define
$$
m_n:=\left\lfloor \sum\nolimits_{i\le \tilde{s}_{n-1}}\kappa(i)\right\rfloor+n,
\quad 
s_n:=\max\, \bigcup\nolimits_{i\le n}\mathrm{supp}\,x_{m_i},
$$
where by convenience $\tilde{s}_n:=\max\{s_{n},k_{n}\}$, and 
$$
k_n:=\max\,(\mathrm{supp}\, x_{m_n} \cap \,\mathrm{supp}\, y).
$$
Note that $k_{n}$ is well defined and it is greater than $\tilde{s}_{n-1}$ because
\begin{displaymath}
\begin{split}
\left|\sum\nolimits_{i>\tilde{s}_{n-1}}x_{m_n}(i)y(i)\right| 
&\ge |x_{m_n}\cdot y|-\sum\nolimits_{i\le \tilde{s}_{n-1}}\kappa(i) \\
&\ge 2^{m_n}-\sum\nolimits_{i\le \tilde{s}_{n-1}}\kappa(i) \ge 2^{m_n}-m_n>0.
\end{split}
\end{displaymath}
Hence, we have by construction $x_{m_{n}}(k_n)y(k_n)\neq 0$ and $s_n\ge k_n>s_{n-1}$ for all $n\ge 1$; in particular, $\max\,\mathrm{supp}\,x_{m_n}=s_n$. 

At this point, let $S\subseteq \{k_n: n \in \omega\}$ be an infinite set which belongs to $\mathcal{I}$, which exists because $\mathcal{I}$ is tall, and denote its increasing enumeration by $(k_{t_n}: n \in \omega)$. 
Lastly, let $v$ be the real sequence supported on $S$ such that $v(k_{t_0}):=0$ and, recursively, 
$$
v(k_{t_n})=\frac{1}{x_{m_{t_n}}(k_{t_n})}\left(n+\sum\nolimits_{i<n}x_{m_{t_i}}(k_{t_i})v(k_{t_i})\right)
$$
for all positive integer $n$. 
It follows by construction that $\mathrm{supp}\,v\subseteq S$, so that 
$v \in c_{00}(\mathcal{I})\subseteq V$, and, for all $n\ge 1$, 
\begin{displaymath}
x_{m_{t_n}}\cdot v
=\sum\nolimits_{i\le s_{t_n}}x_{m_{t_n}}(i)v(i)
=\sum\nolimits_{i\le n}x_{m_{t_n}}(k_{t_i})v(k_{t_i})=n
\end{displaymath}
Therefore $\sup\nolimits_{x \in B}|x\cdot v|\ge \sup\nolimits_{n \in \omega}(x_{m_{t_n}}\cdot v)=\infty$, which completes the proof. 
\end{proof}

\medskip

\begin{proof}
[Proof of Theorem \ref{thm:positiveside}]
By hypothesis there exists an infinite subset $S\subseteq \omega$ such that $W\notin \mathcal{I}$ whenever $W\subseteq S$ is an infinite subset. Denote its increasing enumeration by $(s_n: n \in \omega)$ and define 
$$
B:=\left\{\sum\nolimits_{i\le n}2^{-i}e_{s_i}: n \in \omega\right\}.
$$
Let also $y \in \mathbf{R}^\omega$ such that $y(n)=2^n$ for all $n \in \omega$. Hence, it follows that $B\subseteq c_{00}$ and $\sup_{x \in B}|x\cdot y|=\infty$. 
On the other hand, if $v$ is a sequence in $V$ (and, in particular, $v \in \ell_\infty(\mathcal{I})$) then there exists $M\in \mathbf{R}$ such that $A:=\{n \in \omega: |v(n)|>M\} \in \mathcal{I}$. 
By the definition of $S$, we have that $F:=A \cap S \in \mathrm{Fin}$, hence 
\begin{displaymath}
\begin{split}
\sup\nolimits_{x \in B}|x\cdot v|
&=\sup\nolimits_{n \in \omega}\left|\sum\nolimits_{i\le n}2^{-i}e_{s_i}\cdot v\right|\\
&\le \sup\nolimits_{n \in \omega}\left|\sum_{s_i \in \mathrm{supp}\,v \cap F, i\le n}2^{-i}v(s_i)\right|+\sup\nolimits_{n \in \omega}\left|\sum_{s_i \in \mathrm{supp}\,v \setminus F, i\le n}2^{-i}v(s_i)\right|\\
&\le \sum\nolimits_{i \in F}|v(i)|+M\cdot \sup\nolimits_{n \in \omega}\sum_{s_i \in \mathrm{supp}\,v \setminus F, i\le n}2^{-i}\\
&\le \sum\nolimits_{i \in F}|v(i)|+2M<\infty.
\end{split}
\end{displaymath}
It follows by Theorem \ref{thm:bennetkalton} that $V$ admits a locally convex FK topology. 
\end{proof}

\medskip

\begin{proof}
[Proof of Corollary \ref{cor:restriction}]
Thanks to \cite[Corollary 1.2.11]{MR1711328}, there exists $E \notin \mathcal{I}$ such that $\mathcal{I}\upharpoonright E$ is tall (cf. also \cite[Remark 2.6]{Leo22unbounded}; accordingly, with the notation of \cite[p. 10]{MR1711328}, observe that $(\mathrm{Fin}\times \emptyset)^\perp=\emptyset\times \mathrm{Fin}$, $(\mathrm{Fin}\oplus \mathcal{P}(\omega))^\perp=\mathrm{Fin}$, and $\mathrm{Fin}^\perp=\mathcal{P}(\omega)$). The claim follows by Corollary \ref{cor:cI}.
\end{proof}

\medskip

\begin{proof}
[Proof of Corollary \ref{cor:maximalideal}]
It is well known that if $\mathcal{I}$ is a nonmeager ideal then it is tall, see e.g. \cite[Proposition 2.4]{MR4288216}. 
The claim follows by Corollary \ref{cor:cI}. 
\end{proof}

\medskip

\begin{proof}
[Proof of Corollary \ref{cor:characterizationregularmatrices}]
Thanks to \cite[Proposition 7.2]{MR1865750}, cf. also \cite[Proposition 2.34]{MR4041540}, the ideal $\mathcal{I}_A$ is tall if and only if $\lim_n \max_k a_{n,k}=0$. The claim follows by Corollary \ref{cor:cI}.
\end{proof}

\medskip

\begin{proof}
[Proof of Corollary \ref{cor:orhanplus}]
As in the previous proof, it is enough to observe that $\mathcal{I}_\varphi$ is tall if and only if $\lim_n \max_k \varphi_n(\{k\})>0$, and use Corollary \ref{cor:cI}.
\end{proof}

\medskip

\begin{proof}
[Proof of Corollary \ref{cor:summable}]
It is easy to check that $\mathcal{I}_f$ is tall if and only if $\lim_n f(n)=0$, and use Corollary \ref{cor:cI} as in the previous proofs.
\end{proof}

\medskip

\begin{proof}
[Proof of Proposition \ref{prop:extension}]
\ref{item:partpositive} 
Thanks to \cite[Theorem 4.3.8]{MR738632} and Corollary \ref{cor:cI}, both $d_A$ and $c(\mathcal{I})$ are locally convex FK spaces.
Hence there exist nonempty families $\mathcal{A}$ and $\mathcal{B}$ of seminorms on $d_A$ and $c(\mathcal{I})$, respectively, which generate their topologies, with $|\mathcal{A}|\le \omega$ and $|\mathcal{B}|\le \omega$; here, one may want to recall that we can choose $\mathcal{A}=\bigcup_n\{p_n,q_n\}$, where $p_n(x):=|x_n|$ and $q_n(x):=\sup_m\left|\sum_{k\le m}a_{n,k}x_k\right|$ for all $n \in \omega$ and $x \in d_A$. 
At this point, let 
$$
T: d_A \to \mathbf{R}^\omega
$$
be the function defined by $Tx:=Ax$ for all $x \in d_A$. 
Since $T$ is a linear map between two locally convex FK spaces, then $T$ is continuous, thanks to \cite[Theorem 4.2.8]{MR738632}. 
It follows by Lemma \ref{lem:automaticcontinuity} that $T^{-1}[c(\mathcal{I})]$, that is, $c_A(\mathcal{I})$, is a locally convex FK space whose topology is generated by the family of seminorms 
$$
\mathcal{A}\cup \{b\circ T: b \in \mathcal{B}\}.
$$

\ref{item:partnegative} It follows by Theorem \ref{thm:newnotllocallyconvex}
\end{proof}

%

\medskip

\begin{proof}
[Proof of Corollary \ref{cor:noninclusion}]
Note that  $V:=c_{00}(\mathcal{I})$ is a dense proper vector subspace of $\mathbf{R}^\omega$. Suppose that $V\subseteq \ell_\infty(\mathcal{J})$, so that $V$ admits a locally convex FK topology by Theorem \ref{thm:positiveside}. 
However, this would be in contradiction with Theorem \ref{thm:newnotllocallyconvex}.
\end{proof}

\medskip

\begin{proof}
[Proof of Corollary \ref{cor:devos}]
The first part follows by Corollary \ref{cor:cI}. 
If $\mathcal{I}$ is not tall then it is meager, cf. Corollary \ref{cor:maximalideal}. Therefore $c(\mathcal{I}) \cap \{0,1\}^\omega=\{\bm{1}_A: A \in \mathcal{I}\cup \mathcal{I}^\star\}$, which is a meager subset of $\{0,1\}^\omega$. 
\end{proof}

\medskip

\begin{proof}
[Proof of Proposition \ref{prop:AK}] 
It is known that $c_0=c_0(\mathrm{Fin})$ is a Banach space for which $(e_n)$ is a Schauder basis. Suppose now that $\mathcal{I}\neq \mathrm{Fin}$. Then $c_0(\mathcal{I})$ is nonseparable by \cite[Lemma 2.1]{MR3836186}. However, every AK space is necessarily separable, completing the proof.
\end{proof}

\medskip 

\begin{proof}
[Proof of Proposition \ref{prop:BK}]
Thanks to \cite[Theorem 4.2.11]{MR738632}, a sequence space $V$ is included in some BK space if and only if there exists a sequence $y \in \mathbf{R}^\omega$ such that 
$$
\forall x \in V, \quad x(n)=O(y(n)) \text{ as }n\to\infty.
$$
If $\mathcal{I}=\mathrm{Fin}$ choose $y=(1,1,\ldots)$. If $\mathcal{I}\neq \mathrm{Fin}$ such sequence $y$ cannot exist. Indeed, let us suppose for the sake of contradiction that we can find such $y$, and fix an infinite set $S \in \mathcal{I}$. 
At this point, define the sequence $x \in \mathbf{R}^\omega$ by $x(n)=n(|y_n|+1)$ if $n \in S$ and $x(n)=0$ otherwise. Then $x \in c_{0}(\mathcal{I})$ and, on the other hand, $x(n)\neq O(y(n))$ as $n\to \infty$.
\end{proof}

\bibliographystyle{amsplain}

\providecommand{\href}[2]{#2}

\end{document}